\newtheorem{theorem}{Theorem}[section]
\newtheorem{lemma}[theorem]{Lemma}
\newtheorem{corollary}[theorem]{Corollary}
\newtheorem{proposition}[theorem]{Proposition}
\theoremstyle{definition}
\newtheorem{definition}[theorem]{Definition}
\newtheorem{example}[theorem]{Example}
\newtheorem{remark}[theorem]{Remark}
\numberwithin{equation}{section}
\def\A{\mathrm{A}}
\def\CC{\mathsf{K}} 
\def\CM{{\mathsf{CM}}}
\def\cm{\mathsf{K}_\mathrm{CM}}
\def\CMR{\mathsf{CM}(R)}
\def\Coker{\mathrm{Coker}}
\def\D{\mathrm{D}}
\def\dega{\Rightarrow_{\mathrm{deg}}}
\def\e{\mathrm{e}}
\def\m{\mathfrak{m}}
\def\md{\mathsf{M}(d)}
\def\modR{\mathsf{mod}(R)}
\def\OO{\mathcal{O}} 
\def\Rss{R^{\sharp \sharp }}
\def\syz{\Omega}
\def\XX{\mathcal{X}}
\def\YY{\mathcal{Y}}
\def\X{\mathsf{E}}
\begin{document}
\allowdisplaybreaks
\title[A topology on the set of isomorphism classes of maximal Cohen--Macaulay modules]{A topology on the set of isomorphism classes\\of maximal Cohen--Macaulay modules}
\author{Naoya Hiramatsu} 
\address[N. Hiramatsu]{Department of general education, National Institute of Technology, Kure College, 2-2-11, Agaminami, Kure Hiroshima, 737-8506 Japan}
\email{hiramatsu@kure-nct.ac.jp}
\author{Ryo Takahashi}
\address[R. Takahashi]{Graduate School of Mathematics, Nagoya University, Furocho, Chikusaku, Nagoya, Aichi 464-8602, Japan/Department of Mathematics, University of Kansas, Lawrence, KS 66045-7523, USA}
\email{takahashi@math.nagoya-u.ac.jp}
\urladdr{http://www.math.nagoya-u.ac.jp/~takahashi/}
\subjclass[2010]{13C14, 14D06, 16G60}
%\date{\today}
\keywords{countable Cohen--Macaulay representation type, degeneration of modules, maximal Cohen--Macaulay module, hypersurface, Kn\"{o}rrer's periodicity}
\thanks{NH was supported by JSPS KAKENHI Grant Number 18K13399.
RT was partly supported by JSPS Grant-in-Aid for Scientific Research 16K05098 and JSPS Fund for the Promotion of Joint International Research 16KK0099.}
\begin{abstract}
In this paper, we introduce a topology on the set of isomorphism classes of finitely generated modules over an associative algebra.
Then we focus on the relative topology on the set of isomorphism classes of maximal Cohen--Macaulay modules over a Cohen--Macaulay local ring. 
We discuss the irreducible components over certain hypersurfaces.
\end{abstract}
\maketitle
%%%%%%%%%%%%%%%%%%%%%%%%%%%%%%%%%%%%%%%%%%%%%%%%%%%%%%%%%%%%%%%%%%%%%%%%%
\section{Introduction}

Let $k$ be an algebraically closed field and $R$ a finite-dimensional $k$-algebra.
Denote by $\md$ the set of $R$-module structures on $k^d$.
Then $\md$ is an algebraic set, which is called the {\em module variety} of $d$-dimensional $R$-modules. 
One of the basic problems is to compute the irreducible components of $\md$, and have been studied by many authors including Gabriel \cite{G74}, Morrison \cite{M80} and Crawley-Boevey and Schr\"oer \cite{CS02}.
Another approach to the structure of $\md$ is made by degenerations.
For $M,N\in\md$ we say that {\em $M$ degenerates to $N$} if $N \in \overline{\mathcal{O}_M}$ in $\md$, where ${\mathcal{O}_M}$ stands for the $\mathrm{GL} _d (k)$-orbit of $M$.
A lot of studies on degenerations have been done, which include Riedtmann \cite{R86}, Zwara \cite{Z00} and so on.

Yoshino \cite{Y} extends the notion of degeneration to arbitrary finitely generated modules over arbitrary associative algebras.
However, as a matter of course, for this extended version of degeneration we no longer have module varieties to develop topological arguments on degeneration.
Thus, in this paper, we introduce a topology on the set of isomorphism classes of maximal Cohen--Macaulay (abbr. MCM) modules over a Cohen--Macaulay (abbr. CM) local ring $R$ by means of degenerations of modules, so that we can regard the set $\X (d)$ of isomorphism classes of MCM $R$-modules of multiplicity $d$ as a substitute for the module variety $\md$ in the case of a finite-dimensional algebra.
We investigate in this paper the irreducible components of $\X(d)$ for hypersurfaces of countable CM representation type.

From now on, we give more precise explanation of our results.

\begin{theorem}[Theorem \ref{A4}]
Let $k$ be a field, and let $R$ be an associative $k$-algebra.
Denote by $\modR$ the set of isomorphism classes of finitely generated left $R$-modules.
For a subset $\XX$ of $\modR$, put
$$
\CC(\XX) = \{ Y \in \modR \mid\text{$X^{\oplus n}$ degenerates to $Y^{\oplus n}$ for some $X \in \XX$ and $n>0$}\}.
$$
Then $\CC(-)$ is a Kuratowski closure operator on $\modR$.
In particular, it induces a topology on $\modR$, so that $\X(d)$ is equipped with a relative topology for each integer $d>0$ in the case where $R$ is a CM local ring and $k$ is its coefficient field.
\end{theorem}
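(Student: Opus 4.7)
The plan is to verify the four Kuratowski closure axioms for $\CC(-)$: (i) $\CC(\emptyset)=\emptyset$; (ii) $\XX\subseteq\CC(\XX)$; (iii) $\CC(\XX\cup\YY)=\CC(\XX)\cup\CC(\YY)$; (iv) $\CC(\CC(\XX))=\CC(\XX)$. Once these are checked, the standard equivalence between Kuratowski closure operators and topologies produces the topology on $\modR$, and the relative topology on $\X(d)$ is automatic since $\X(d)\subseteq\modR$.

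Axioms (i) and (iii) are immediate from unpacking the definition: $\CC(\emptyset)$ demands a witness $X\in\emptyset$, and the condition defining $Y\in\CC(\XX\cup\YY)$ provides a witness $X$ lying in $\XX$ or in $\YY$, which separates the union. For axiom (ii), I invoke the reflexivity of Yoshino's degeneration: the constant family witnesses that any $X$ degenerates to itself, so with $n=1$ every $X\in\XX$ lies in $\CC(\XX)$.

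Axiom (iv) is the main step. The direction $\CC(\XX)\subseteq\CC(\CC(\XX))$ follows from (ii). For the reverse inclusion, given $Z\in\CC(\CC(\XX))$ we obtain $Y\in\CC(\XX)$ and $n>0$ with $Y^{\oplus n}$ degenerating to $Z^{\oplus n}$, and $X\in\XX$ and $m>0$ with $X^{\oplus m}$ degenerating to $Y^{\oplus m}$. Taking $n$-fold (respectively $m$-fold) direct sums of the two degenerations yields that $X^{\oplus mn}$ degenerates to $Y^{\oplus mn}$ and $Y^{\oplus mn}$ degenerates to $Z^{\oplus mn}$. Transitivity of degeneration then gives that $X^{\oplus mn}$ degenerates to $Z^{\oplus mn}$, placing $Z$ in $\CC(\XX)$ with the choice $\ell=mn$.

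The main obstacle is locating the two facts about Yoshino's degeneration used in (iv) — namely closure under forming direct sums of identical copies of a given degeneration, and transitivity — but both are established in \cite{Y}. With these in hand, the verification is a clean bookkeeping exercise, and the existence of the topology together with the relative topology on $\X(d)$ follows formally.
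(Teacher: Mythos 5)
Your verification of axioms (1), (2), (3) matches the paper's, and the overall architecture of your argument for axiom (4) is also the same: pass to the common exponent $mn$ by taking direct sums of copies of the two given degenerations, then compose them. The gap is in the composition step. You invoke ``transitivity of degeneration'' and assert that it is established in \cite{Y}; but for an arbitrary associative $k$-algebra --- in particular for the infinite-dimensional algebras such as CM local rings that are the point of this paper --- transitivity of $\dega$ is \emph{not} known. The paper states this explicitly, and it is precisely the reason $\CC$ is defined with the auxiliary exponent $n$ rather than as the naive operator $\CC^0(\XX)=\{N\mid M\dega N\text{ for some }M\in\XX\}$, which would be a closure operator for free if $\dega$ were transitive. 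What is actually available is Theorem \ref{T} (Takahashi): from $L\dega M$ and $M\dega N$ one may only conclude $L^{\oplus c}\dega N^{\oplus c}$ for \emph{some} integer $c>0$, not $L\dega N$.

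The repair is easy and does not change your structure: having produced $X^{\oplus mn}\dega Y^{\oplus mn}$ and $Y^{\oplus mn}\dega Z^{\oplus mn}$, apply Theorem \ref{T} to obtain $X^{\oplus mnc}\dega Z^{\oplus mnc}$ for some $c>0$, which still witnesses $Z\in\CC(\XX)$ because the definition only asks for \emph{some} positive exponent. This is exactly what the paper does. As written, however, your appeal to plain transitivity is unjustified, and since the entire design of the theorem revolves around circumventing the unknown transitivity, the justification you give for the key step cannot stand.
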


Recall that a CM local ring $R$ is said to have {\em countable CM representation type} if there exist infinitely but only countably many isomorphism classes of indecomposable MCM $R$-modules.
Let $R$ be a complete equicharacteristic local hypersurface with uncountable algebraically closed residue field $k$ of characteristic not two.
Then $R$ has countable CM representation type if and only if $R$ is isomorphic to the ring $k[\![x_0 , x_1 , x_2 , \dots, x_n]\!]/(f)$, where
$$
f=\begin{cases}
x_1^2+x_2^2+\cdots+x_n^2 & (\A_\infty),\\
x_0^2x_1+x_2^2+\cdots+x_n^2 & (\D_\infty).
\end{cases}
$$

\begin{theorem}[Theorem \ref{main}]
Let $k$ be an uncountable algebraically closed field of characteristic not two, and let $R$ be a complete local hypersurface over $k$ having countable CM represenation type.
Then for each integer $d>0$ the topological space $\X(d)$ can be represented as a finite union of irreducible closed subsets, provided that {\rm(i)} $R$ has type $(\D_{\infty})$ or {\rm(ii)} $\dim R$ is odd and $R$ has type $(\A_{\infty})$.
\end{theorem}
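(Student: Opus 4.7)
The overall strategy is to combine the explicit classification of indecomposable MCM modules over hypersurfaces of types $(\A_\infty)$ and $(\D_\infty)$ with Kn\"orrer's periodicity, and then to partition $\X(d)$ into finitely many closed pieces that each arise as the $\CC$-closure of a one-parameter family of MCM modules. Because $R$ is a complete local ring, $\CM(R)$ satisfies Krull--Schmidt, so every element of $\X(d)$ is determined up to isomorphism by a multiset of indecomposable direct summands. Since the multiplicity of an MCM module is the sum of the multiplicities of its indecomposable summands, and the multiplicities of the indecomposables in the $(\A_\infty)$/$(\D_\infty)$ lists are bounded below by a positive integer, for each fixed $d$ only finitely many Krull--Schmidt ``decomposition patterns'' are possible.

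The first concrete step is to reduce dimension via Kn\"orrer's periodicity, which gives a bijection between the isomorphism classes of (nonfree) indecomposable MCM modules over $R=k[\![x_0,\dots,x_n]\!]/(f)$ and over $k[\![x_0,\dots,x_n,u,v]\!]/(f+uv)$, and which is compatible with direct sums, hence with the decomposition patterns above. Standard arguments show that this bijection is compatible with the degeneration relation and with multiplicity (up to a harmless rescaling), so it induces a homeomorphism between the corresponding $\X(d)$'s. Using this, in case $(\A_\infty)$ with $\dim R$ odd we reduce to $\dim R=1$, i.e.\ $R=k[\![x_0,x_1]\!]/(x_1^2)$, and in case $(\D_\infty)$ we reduce to $\dim R\in\{1,2\}$. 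In these small cases the indecomposable MCM modules are given explicitly by matrix factorizations (in families parameterized by a nonnegative integer and, in the $(\D_\infty)$ case, an extra continuous parameter in $k$).

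Next, for each admissible decomposition pattern write the corresponding subset $\YY\subseteq\X(d)$ and show that its $\CC$-closure is irreducible. The key is to produce explicit degenerations $M_n\oplus M_m \dega M_{n+1}\oplus M_{m-1}$ (and the $(\D_\infty)$ analogues involving the continuous parameter) at the level of indecomposables; these are verified using Yoshino's criterion via short exact sequences built from the matrix factorizations. Taking direct sums with a fixed complement, such degenerations inside $\YY$ show that every element of $\YY$ lies in the $\CC$-closure of a single ``generic'' module of the pattern, which yields irreducibility. Closedness of $\CC(\YY)$ is automatic from Theorem \ref{A4}, and taking the union over the finitely many patterns exhausts $\X(d)$.

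The main obstacle is the irreducibility part: one must verify that within every decomposition pattern, \emph{every} isomorphism class actually appears as a degeneration of one distinguished generic class. The $(\A_\infty)$ case is essentially discrete, so this reduces to chaining the elementary degenerations $M_n\oplus M_m\dega M_{n+1}\oplus M_{m-1}$; the $(\D_\infty)$ case is more delicate because of the continuous parameter, and there one must check that specialization of this parameter can also be realized as a degeneration, which again comes down to producing an explicit family and short exact sequence fitting Yoshino's framework.
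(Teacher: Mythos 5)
Your overall architecture (Krull--Schmidt plus bounded multiplicity gives finitely many decomposition patterns; Kn\"orrer periodicity reduces to dimension $1$ for odd-dimensional $(\A_\infty)$ and to dimensions $1,2$ for $(\D_\infty)$) matches the paper, but the engine you propose for irreducibility does not work. Your key degenerations $M_n\oplus M_m\dega M_{n+1}\oplus M_{m-1}$ preserve both the number of nonfree summands and the total index $n+m$, so chaining them can never collapse the infinitely many indecomposables of a fixed multiplicity into the closure of finitely many modules. Concretely, for $R=k[\![x,y]\!]/(x^2)$ the space $\X(2)=\{R,(x)^{\oplus2},I_1,I_2,\dots\}$ with $I_n=(x,y^n)$ is infinite, and the paper proves $\X(2)=\cm(R)$ using the self-syzygy exact sequences $0\to I_n\to R^{\oplus2}\to I_n\to0$, which give $R^{\oplus2}\dega I_n^{\oplus2}$ for \emph{every} $n$; that is, everything must be placed under the free module, and no degeneration of your proposed shape produces this. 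The same mechanism ($M\cong\syz M$, hence $F\dega M^{\oplus2}$ for $F$ free) is what handles the two-dimensional $(\D_\infty)$ case. For the one-dimensional $(\D_\infty)$ ring $k[\![x,y]\!]/(x^2y)$ the situation is worse for your plan: the needed degenerations are $M_n^{\pm}\dega M_{n+2}^{\pm}$ and $I_n^{\mp}\dega I_{n+1}^{\pm}$ between \emph{indecomposable} modules, and these cannot be obtained from Yoshino's short-exact-sequence criterion (Proposition \ref{A2}(2) always yields a decomposable target); the paper must construct explicit $V$-flat families of matrix factorizations over $R\otimes_kV$ realizing Definition \ref{A1} directly. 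This construction is the technical heart of the proof and is absent from your plan.

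Two further inaccuracies: the rings in question have countable CM representation type, so the indecomposables are indexed by a discrete parameter only --- there is no ``extra continuous parameter in $k$'' in the $(\D_\infty)$ case. And Kn\"orrer periodicity does not induce a homeomorphism between the $\X(d)$'s: the multiplicities do not rescale uniformly under $(-)^{\sharp\sharp}$ (e.g.\ for $(\D_\infty)$, $\e(R)=3$ but $\e(\Rss)=2$, while $\e((x))=\e((x)^{\sharp\sharp})=2$), so the bijection on indecomposables does not even map $\X(d)$ into a single $\X(d')$. What is actually needed, and what the paper proves via explicit $\sharp\sharp$-lifts of the degenerating families (and \cite[Proposition 5.3]{HTY}), is the one-directional statement that degenerations over $R$ induce degenerations over $\Rss$, after which one regroups by multiplicity over $\Rss$ at the end.
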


The paper is organized as follows.
In Section \ref{Closure}, we recall the notion of degenerations of modules over arbitrary algebras, and introduce a closure operator using degenerations (Definition \ref{A3}), which induces a topology the set of isomorphism classes of MCM modules (Theorem \ref{A4}). 
In Section \ref{Irreducible}, we give a description of the irreducible components of the set of isomorphism classes of MCM modules (Corollary \ref{B2} and Theorem \ref{main}).
%%%%%%%%%%%%%%%%%%%%%%%%%%%%%%
%%%%%%%%%%%%%%%%%%%%%%%%%%%%%%
\section{A closure operator on $\modR$}\label{Closure}

First of all, let us recall the definition of degenerations of modules over an arbitrary algebra.
Throughout this paper, unless otherwise specified, let $k$ be a field, and let $R$ be an (associative) $k$-algebra.
Denote by $\modR$ the set of isomorphism classes of finitely generated left $R$-modules.

\begin{definition}\cite[Definition 2.1]{Y04}\label{A1}
Let $V = k[t]_{(t)}$ be a localization of a polynomial ring, and let $K= k(t)$ be a rational function field, which is the quotient field of $V$.
For $M,N \in \modR$ we say that {\em $M$ degenerates to $N$} and write $M \dega N$, if there exists a finitely generated left $R\otimes _{k} V$-module $Q$ such that $Q$ is flat as a $V$-module, $Q/tQ \cong N$ as an $R$-module, and $Q_t \cong M\otimes _{k} K$ as an $R\otimes _{k} K$-module.
\end{definition}

We state several fundamental properties of degenerations of modules.

\begin{proposition}\label{A2}
\begin{enumerate}[\rm(1)]
\item
One has $M\dega N$ if and only if there exists an exact sequence
$$
0 \to Z \xrightarrow{\left(
\begin{smallmatrix}
\phi \\
\psi
\end{smallmatrix} \right)
} M\oplus Z \to N \to 0
$$
of finitely generated $R$-modules such that $\psi$ is nilpotent.
If $R$ is a CM local ring and $M,N$ are MCM modules, then $Z$ is MCM as well.
\item
If there exists an exact sequence $0 \to L \to M \to N \to 0$ of finitely generated $R$-modules, then one has $M \dega L \oplus N$.
\item
If $M \dega N$ and $M' \dega N'$, then $M \oplus M' \dega N \oplus N'$.
\item
One has $M\dega M$.
\item
Suppose that $R$ is a CM local ring.
If $M\dega N$ and $N$ is MCM, then $M$ is MCM as well.
\item
If $M \dega N$, then $M$ and $N$ give the same class in the Grothendieck group of $R$.
Hence, if $R$ is a CM local ring and $M,N$ are MCM modules, then $\e(M) = \e(N)$, where $\e(-)$ stands for the (Hilbert--Samuel) multiplicity.
\item
Suppose $M\dega N$.
Then $M=0$ if and only if $N=0$.
\end{enumerate}
\end{proposition}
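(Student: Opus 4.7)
The plan is to establish (1) first, since it is the deepest statement and Yoshino's characterization, and then to deduce the remaining items from either the defining module $Q$ or the exact sequence furnished by (1). For (1), I would follow Yoshino's argument in \cite{Y04}: starting from $Q$ as in Definition \ref{A1}, the short exact sequence $0 \to Q \xrightarrow{t} Q \to N \to 0$ of $R\otimes_k V$-modules, together with the isomorphism $Q_t \cong M\otimes_k K$, is used to extract a finitely generated $R$-submodule $Z$ of $Q$ fitting into
\[
0 \to Z \xrightarrow{\binom{\phi}{\psi}} M \oplus Z \to N \to 0
\]
with $\psi$ nilpotent. The converse constructs $Q$ explicitly from the sequence as a cokernel of a matrix involving $t$ and $\psi$; $V$-flatness is controlled precisely by the nilpotence of $\psi$. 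The MCM addendum follows from the depth lemma: if $M,N$ are MCM then $\depth Z \ge \dim R$, forcing $Z$ to be MCM.

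Items (2)--(4), (6), and (7) then follow quickly from (1) and the definition. For (2), apply (1) with $Z=L$, $\phi$ the given inclusion, and $\psi=0$; the cokernel of $\binom{\phi}{0}$ is exactly $L\oplus N$. For (3), form $Q\oplus Q'$ over $V$. For (4), take $Q=M\otimes_k V$. For (6), the sequence from (1) immediately yields $[M]=[N]$ in the Grothendieck group of $R$, and since multiplicity is additive on short exact sequences of MCM modules of common dimension, $\e(M)=\e(N)$. For (7), one direction uses that $V$-flat modules are $t$-torsion-free: if $M=0$ then $Q_t=0$, so $Q=0$ and hence $N=0$; conversely, if $N=0$ then $Q=tQ$, so Nakayama applied to $Q$ over the local ring $(R\otimes_k V)_{\m\otimes V+tV}$ gives $Q=0$ and therefore $M=0$.

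The main obstacle is (5): deducing that $M$ is MCM whenever $N$ is. Invoking (1) is circular here because its MCM addendum already assumes $M$ is MCM, so I would argue directly on the module $Q$. Since $Q$ is $V$-flat and $t\in V$ is a nonzerodivisor, $t$ is regular on $Q$, and flat base change yields
\[
\depth_{(R\otimes_k V)_{\m\otimes V+tV}} Q \;=\; \depth_R(Q/tQ) + 1 \;=\; \dim R + 1.
\]
Localizing at $t$ preserves a depth bound on the generic fiber $Q_t\cong M\otimes_k K$; combined with the faithful flatness of $K$ over $k$, this forces $\depth_R M\ge\dim R$, and hence $M$ is MCM by (6). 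The delicate point, which I expect to be the technical crux, is matching depths computed over $R\otimes_k V$ with those over $R$, i.e.\ carefully tracking how depth behaves under the two flat operations (specialization $Q\mapsto Q/tQ$ and localization $Q\mapsto Q_t$) that link $M$ and $N$.
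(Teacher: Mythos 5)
Your handling of (2), (3), (4), (6) and the forward direction of (7) is correct, and where it differs from the paper --- which deduces (3) and (4) from (1) and cites Yoshino for (2) --- your direct constructions ($Z=L$ with $\psi=0$; $Q\oplus Q'$; $Q=M\otimes_kV$) are perfectly good substitutes. For (1) and (5) the paper's own proof is purely by citation ({[Y04, Theorem 2.2]}, {[Y04, Remark 4.3]}, {[Y04, Theorem 3.2]}, {[Y02, Corollary 4.7]}), so the question is whether your sketches would actually close; two of them would not as written.

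First, the converse direction of (7): the proposition is stated for an arbitrary associative $k$-algebra $R$, so there is no $\m$ to localize at; and even for $R$ commutative local, $t$ does not lie in the Jacobson radical of $R\otimes_kV$ (e.g.\ $1-st$ is not invertible in $k[s]\otimes_kV$), and showing $Q_P=0$ at the single maximal ideal $P=\m\otimes V+tV$ does not yield $Q=0$. The paper argues instead from the exact sequence of (1): if $N=0$ then $\left(\begin{smallmatrix}\phi \\ \psi\end{smallmatrix}\right)$ is an isomorphism, so $\psi$ admits a right inverse and is a surjective nilpotent endomorphism of $Z$, forcing $Z=0$ and hence $M=0$. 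Second, in (5) the sentence ``localizing at $t$ preserves a depth bound on the generic fiber'' is precisely the semicontinuity of depth in a $V$-flat family that has to be proved; it does not follow from the flat-base-change formula you display, since inverting $t$ moves you to primes not containing $t$, about which that formula says nothing. This is the content of the cited results of Yoshino; you correctly identify it as the crux but leave it unproved. A smaller point: the MCM addendum to (1) does not follow from the depth lemma alone --- applied to $0\to Z\to M\oplus Z\to N\to0$ it gives only $\depth Z\ge\min(\depth Z,\dim R)$, which is vacuous. One must use the nilpotency of $\psi$: the long exact sequence of local cohomology shows that $H^i_\m(\psi)$ is a nilpotent automorphism of $H^i_\m(Z)$ for $i<\dim R$, whence $H^i_\m(Z)=0$.
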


\begin{proof}
The two assertions in (1) follow from \cite[Theorem 2.2]{Y04} (see also \cite[Theorem 1]{Z00}) and \cite[Remark 4.3]{Y04}, respectively.
The assertion (2) is shown in \cite[Remark 2.5]{Y04}.
The assertions (3) and (6) are easy consequences of (1).
Applying (1) to the trivial exact sequence $0\to M\to M\oplus M\to M\to0$ implies (4).
The assertion (5) follows from \cite[Theorem 3.2]{Y04} and \cite[Corollary 4.7]{Y02}.
Finally, let us show (7).
There is an exact sequence as in (1).
If $M=0$, then the map $\psi$ is injective, and its nilpotency implies $Z=0$, which implies $N=0$.
Conversely, assume $N=0$.
Then there exists a map $\left(
\begin{smallmatrix}
\alpha&\beta\end{smallmatrix} \right):M\oplus Z\to Z$ such that $\left(
\begin{smallmatrix}
\phi \\
\psi
\end{smallmatrix} \right)\left(
\begin{smallmatrix}
\alpha&\beta\end{smallmatrix} \right)=1$.
Then we have $\psi\beta=1$, which implies that $\psi$ is surjective.
Since $\psi$ is nilpotent, the zero map of $Z$ is surjective, which means $Z=0$.
Hence $M=0$.
\end{proof}

For each subset $\XX$ of $\modR$, put
$$
\CC^0 (\XX) = \{ N \in \modR \mid M\dega N\text{ for some }M\in\XX\},
$$
and for each $M\in\modR$ put $\CC^0(M)=\CC^0(\{M\})$.

Suppose that $R$ is a finite-dimensional $k$-algebra.
Then every finitely generated $R$-module is a finite-dimensional $k$-vector space, and for all $N\in\modR$ one has
$$
N\in\CC^0(M)\iff N\in\overline{\OO_M}.
$$
Thus we may regard $\CC^0 (M)$ as a substitute for $\overline{\OO_M}$. 
Moreover, it is clear that
\begin{equation}\label{2}
\CC^0(\CC^0(\XX))=\CC^0(\XX),
\end{equation}
or in other words, $\dega$ is transitive.
(In fact, $\dega$ defines a partial order on $\modR$; see \cite{Z98}.)

On the other hand, in the general (i.e. infinite-dimensional) case, it is unknown whether $\dega$ is transitive, and hence we do not know if \eqref{2} holds.
Recently, this problem has been partially resolved by Takahashi \cite[Theorem 1.2]{T}.

\begin{theorem}\label{T}
Let $R$ be a $k$-algebra and let $L$, $M$, $N \in \modR$. 
Assume that $L \dega M$ and $M \dega N$. 
Then $L^{\oplus n} \dega N^{\oplus n}$ for some integer $n>0$. 
\end{theorem}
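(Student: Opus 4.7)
The plan is to convert both hypothesized degenerations into Yoshino-form short exact sequences via Proposition \ref{A2}(1), glue them together by a pullback, and finally repackage the combined data, after passing to $n$-fold direct sums, into a Yoshino-form witness for $L^{\oplus n}\dega N^{\oplus n}$.

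First, by Proposition \ref{A2}(1) I fix short exact sequences
\[
\xi_{1}\colon 0\to Z_{1}\xrightarrow{\binom{\phi_{1}}{\psi_{1}}}L\oplus Z_{1}\xrightarrow{\beta_{1}}M\to 0,\quad
\xi_{2}\colon 0\to Z_{2}\xrightarrow{\binom{\phi_{2}}{\psi_{2}}}M\oplus Z_{2}\xrightarrow{\beta_{2}}N\to 0
\]
with $\psi_{1},\psi_{2}$ nilpotent. Next I form the pullback $P:=(L\oplus Z_{1})\times_{M}(M\oplus Z_{2})$ of $\beta_{1}$ and the projection $M\oplus Z_{2}\to M$. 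Since the latter is split, one has $P\cong L\oplus Z_{1}\oplus Z_{2}$. Composing the other projection of $P$ with $\beta_{2}$ gives a surjection $P\twoheadrightarrow N$ whose kernel $K$ sits in a short exact sequence $0\to Z_{1}\to K\to Z_{2}\to 0$, thus producing
\[
\eta\colon\; 0\to K\to L\oplus Z_{1}\oplus Z_{2}\to N\to 0.
\]

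A direct computation shows that the composition $K\hookrightarrow P\twoheadrightarrow Z_{1}\oplus Z_{2}$ (projection onto the $Z$-summands of $P$) is block-triangular with diagonal pieces $\psi_{1}$ on $Z_{1}$ and $\psi_{2}$ on $K/Z_{1}\cong Z_{2}$, hence nilpotent. If $K$ and $Z_{1}\oplus Z_{2}$ were isomorphic as summands of $P$, then $\eta$ would already be a Yoshino-form witness for $L\dega N$; however, $K$ is generally a nonsplit extension of $Z_{2}$ by $Z_{1}$. The way around this is to combine $\eta$ with the Yoshino-form witness for $K\dega Z_{1}\oplus Z_{2}$ coming from Proposition \ref{A2}(2) applied to $0\to Z_{1}\to K\to Z_{2}\to 0$, and to iterate the pullback construction. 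Exploiting the nilpotent structure on $K$, for $n$ sufficiently large (bounded in terms of the nilpotency indices of $\psi_{1}$ and $\psi_{2}$) the combined data can be reassembled into a single Yoshino-form sequence
\[
0\to (Z_{1}\oplus Z_{2})^{\oplus n}\to L^{\oplus n}\oplus (Z_{1}\oplus Z_{2})^{\oplus n}\to N^{\oplus n}\to 0
\]
with nilpotent self-map, and Proposition \ref{A2}(1) read in reverse then delivers $L^{\oplus n}\dega N^{\oplus n}$.

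The main obstacle is precisely this final repackaging. Cancellation of direct summands fails for $\dega$ in general, and the extension class of $K$ in $\mathrm{Ext}^{1}(Z_{2},Z_{1})$ may be nontrivial, so one cannot substitute $Z_{1}\oplus Z_{2}$ for $K$ at $n=1$. The role of the $n$-fold direct sum is to provide just enough slack for the nilpotent filtration on $K$ to split the relevant extension up to degeneration, which is precisely why the conclusion is $L^{\oplus n}\dega N^{\oplus n}$ rather than $L\dega N$.
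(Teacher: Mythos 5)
The paper does not actually prove Theorem \ref{T}; it quotes it as \cite[Theorem 1.2]{T}, so the only ``proof'' in the paper is a citation. Your attempt is therefore being judged on its own merits, and it has a genuine gap at exactly the point you flag as ``the main obstacle.'' The splicing setup is fine: pulling back $\beta_1$ along the split surjection $M\oplus Z_2\to M$ does give $P\cong L\oplus Z_1\oplus Z_2$ and an exact sequence $0\to K\to L\oplus Z_1\oplus Z_2\to N\to 0$ with $K$ an extension of $Z_2$ by $Z_1$. But this sequence is not a Yoshino-form witness precisely because the kernel $K$ and the complementary summand $Z_1\oplus Z_2$ are different modules, and Proposition \ref{A2}(1) requires them to be literally the same module $Z$, with the nilpotent map $\psi$ being the $Z$-component of the inclusion $Z\hookrightarrow M\oplus Z$. (Incidentally, your statement that the composite $K\hookrightarrow P\twoheadrightarrow Z_1\oplus Z_2$ is ``nilpotent'' is not even well posed: it is a map between two non-isomorphic modules, not an endomorphism.)

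The step that would close the gap --- ``for $n$ sufficiently large the combined data can be reassembled into a single Yoshino-form sequence $0\to (Z_1\oplus Z_2)^{\oplus n}\to L^{\oplus n}\oplus(Z_1\oplus Z_2)^{\oplus n}\to N^{\oplus n}\to 0$ with nilpotent self-map'' --- is asserted, not proved, and it is the entire content of the theorem. Your proposed mechanism, namely to ``combine $\eta$ with the Yoshino-form witness for $K\dega Z_1\oplus Z_2$ and iterate the pullback construction,'' is essentially circular: combining a degeneration-like exact sequence with a further degeneration of its kernel is another instance of the transitivity problem you are trying to solve, so you cannot invoke it as a black box. No construction is given for the claimed sequence, no explanation of how the nilpotency indices of $\psi_1,\psi_2$ determine $n$, and no argument for why taking $n$-fold direct sums ``splits the relevant extension up to degeneration.'' Identifying the obstruction correctly is valuable, but as written the proposal stops exactly where the real work begins; to complete it you would need the explicit multi-copy construction carried out in \cite{T}.
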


Hence, the relation $\dega$ is transitive up to direct sums of copies. 
Taking this into account, we make the following definition.

\begin{definition}\label{A3}
For a subset $\XX$ of $\modR$ we put 
$$
\CC(\XX) = \{ N \in \modR \mid M^{\oplus n} \dega N^{\oplus n} \text{ for some } M \in \XX \text{ and } n>0\}. 
$$
If $\XX$ consists of a single module $M$, then we simply denote it by $\CC(M)$.
\end{definition}

Taking advantage of Theorem \ref{T}, one can prove the following statement.

\begin{theorem}\label{A4}
The assignment $\XX\mapsto\CC(\XX)$ induces a {\em Kuratowski closure operator} on $\modR$, that is,

{\rm(1)} $\CC (\emptyset) = \emptyset$,\qquad
{\rm(2)} $\mathcal{X} \subseteq \CC(\mathcal{X} )$,\qquad
{\rm(3)} $\CC (\mathcal{X} \cup \mathcal{Y} ) = \CC (\mathcal{X} ) \cup \CC (\mathcal{Y})$,\qquad
{\rm(4)} $\CC (\CC ( \mathcal{X}) ) = \CC (\mathcal{X} )$
\\
hold for any subsets $\XX,\YY$ of $\modR$. 
In particular, it defines a topology on $\modR$: a subset $\XX$ of $\modR$ is closed if and only if $\CC(\XX)=\XX$, if and only if $\XX=\CC(\YY)$ for some subset $\YY$ of $\modR$.
\end{theorem}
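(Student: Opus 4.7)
The plan is to verify the four Kuratowski axioms in order, with axioms (1)--(3) being essentially formal and axiom (4) being the substantive step that will invoke Theorem \ref{T}.

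For (1), I would just note that the defining condition for $N\in\CC(\emptyset)$ requires some $M\in\emptyset$, which is impossible. For (2), applying Proposition \ref{A2}(4) gives $M\dega M$ for any $M\in\XX$, so taking $n=1$ shows $M\in\CC(\XX)$. For (3), the inclusion $\supseteq$ is the obvious monotonicity of $\CC$ in its argument, while for $\subseteq$ one observes that any element $M\in\XX\cup\YY$ witnessing $N\in\CC(\XX\cup\YY)$ lies in one of $\XX$ or $\YY$, placing $N$ in $\CC(\XX)$ or $\CC(\YY)$ respectively.

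The main work lies in axiom (4). The inclusion $\CC(\XX)\subseteq\CC(\CC(\XX))$ follows from (2) applied to $\CC(\XX)$. For the reverse, I would suppose $N\in\CC(\CC(\XX))$ and unwind the definition twice: there are $L\in\CC(\XX)$ and $n>0$ with $L^{\oplus n}\dega N^{\oplus n}$, and there are $M\in\XX$ and $m>0$ with $M^{\oplus m}\dega L^{\oplus m}$. Using Proposition \ref{A2}(3) to take direct sums of copies, I would upgrade these to $M^{\oplus mn}\dega L^{\oplus mn}$ and $L^{\oplus mn}\dega N^{\oplus mn}$. Finally, Theorem \ref{T} applied to this composable pair of degenerations produces an integer $p>0$ with $M^{\oplus mnp}\dega N^{\oplus mnp}$, so that $N\in\CC(\XX)$ with witness $mnp$.

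Once the four axioms are established, the induced topology is given by the standard Kuratowski recipe: a subset is closed exactly when it equals its $\CC$-closure, equivalently when it lies in the image of $\CC$. I expect the only genuine obstacle to be axiom (4), and it is precisely the reason the definition must pass to direct summands: ordinary transitivity of $\dega$ is not known in the infinite-dimensional setting, so Theorem \ref{T} is the essential input that makes the closure operator well-defined on $\modR$.
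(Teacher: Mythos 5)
Your proposal is correct and follows essentially the same route as the paper's proof: axioms (1)--(3) are handled formally (with Proposition \ref{A2}(4) giving reflexivity for (2)), and axiom (4) is proved by unwinding the definition twice, using Proposition \ref{A2}(3) to bring both degenerations to a common exponent, and then invoking Theorem \ref{T} to compose them up to a further multiple. No gaps; this matches the paper's argument step for step.
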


\begin{proof}
It is straightforward to show the assertions (1) and (3), while Proposition \ref{A2}(4) implies that the assertion (2) holds.
Let us show the assertion (4).
Pick any module $N\in\CC(\CC(\XX))$.
Then there are a module $M\in\CC(\XX)$ and an integer $a>0$ such that $M^{\oplus a}\dega N^{\oplus a}$.
Hence there are a module $L\in\XX$ and an integer $b>0$ such that $L^{\oplus b}\dega M^{\oplus b}$.
Using Proposition \ref{A2}(3), we get degenerations $M^{\oplus ab}\dega N^{\oplus ab}$ and $L^{\oplus ab}\dega M^{\oplus ab}$.
Applying Theorem \ref{T}, we obtain a degeneration $L^{\oplus abc}\dega N^{\oplus abc}$ for some integer $c>0$.
This says that $N$ belongs to $\CC(\XX)$, which proves that $\CC(\CC(\XX))$ is contained in $\CC(\XX)$.
The opposite inclusion follows from (2), and we conclude that the equality $\CC(\CC(\XX))=\CC(\XX)$ holds.
\end{proof}

In the remainder of this paper, whenever we consider the set $\modR$ of isomorphism classes of finitely generated $R$-modules, we equip it with the topology defined in the above Theorem \ref{A4}.

We close this section by stating a corollary of Theorem \ref{A4}.

\begin{corollary}\label{4}
\begin{enumerate}[\rm(1)]
\item
For subsets $\XX,\YY$ of $\modR$ with $\XX\subseteq\YY$ it holds that $\CC(\XX)\subseteq\CC(\YY)$.
\item
Let $M,N\in\modR$ be such that $M^{\oplus n} \dega N^{\oplus n}$ for some $n>0$.
One then has $\CC(N) \subseteq \CC(M)$.
\item
The set $\CC(M)$ is an irreducible closed subset of $\modR$ for each $M\in\modR$.
\item
For every subset $\XX$ of $\modR$ there exists a decomposition $\CC(\XX) = \bigcup _{M \in \XX }  \CC (M)$ into irreducible closed subsets. 
\end{enumerate}
\end{corollary}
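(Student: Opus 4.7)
The plan is to derive all four parts directly from the definition of $\CC$ and from the closure axioms already established in Theorem \ref{A4}, without having to reopen any degeneration-theoretic computation. I expect no real obstacle: once Theorem \ref{T} has been invested into proving $\CC(\CC(\XX))=\CC(\XX)$, everything here is bookkeeping.

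For (1), I would simply unwind the definition: if $N\in\CC(\XX)$, then $M^{\oplus n}\dega N^{\oplus n}$ for some $M\in\XX$ and some $n>0$; since $M$ also lies in $\YY$, the same witness shows $N\in\CC(\YY)$. For (2), the hypothesis $M^{\oplus n}\dega N^{\oplus n}$ is exactly the assertion that $N\in\CC(M)$, so $\{N\}\subseteq\CC(M)$. Monotonicity (1) then yields $\CC(N)\subseteq\CC(\CC(M))$, and the fourth Kuratowski axiom (Theorem \ref{A4}(4)) collapses the right-hand side to $\CC(M)$.

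For (3), closedness of $\CC(M)$ is immediate from $\CC(\CC(M))=\CC(M)$. For irreducibility, suppose $\CC(M)=\XX\cup\YY$ with $\XX,\YY$ closed in $\CC(M)$, i.e. of the form $\CC(M)\cap A$ and $\CC(M)\cap B$ for closed $A,B\subseteq\modR$. Note $M\in\CC(M)$ by Proposition \ref{A2}(4), so $M$ lies in $A$ or in $B$; assume the former. Then $\{M\}\subseteq A$, so by (1) and the fact that $A$ is closed, $\CC(M)\subseteq\CC(A)=A$, forcing $\CC(M)\cap A=\CC(M)$, i.e. $\XX=\CC(M)$. Hence $\CC(M)$ cannot be written as the union of two proper closed subsets.

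Finally, for (4), the inclusion $\bigcup_{M\in\XX}\CC(M)\subseteq\CC(\XX)$ is (1) applied to each singleton $\{M\}\subseteq\XX$. The reverse inclusion is a direct reading of Definition \ref{A3}: any $N\in\CC(\XX)$ comes with a witness $M\in\XX$ such that $M^{\oplus n}\dega N^{\oplus n}$, so $N\in\CC(M)$. Each $\CC(M)$ is irreducible and closed by (3), so this is the required decomposition. The mildest subtle point, which I would flag, is simply that $M\in\CC(M)$ needs the reflexivity statement $M\dega M$ from Proposition \ref{A2}(4); otherwise the irreducibility argument in (3) would have no basepoint to play off.
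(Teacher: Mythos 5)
Your proposal is correct and follows essentially the same route as the paper: (2)--(4) are argued identically (reduce (2) to $N\in\CC(M)$ plus monotonicity and idempotence, get irreducibility in (3) from $M\in\CC(M)$ forcing $\CC(M)$ into whichever closed piece contains $M$, and assemble (4) from (1) and (3)). The only cosmetic difference is in (1), where you unwind Definition \ref{A3} directly while the paper deduces monotonicity from the Kuratowski axiom $\CC(\XX\cup\YY)=\CC(\XX)\cup\CC(\YY)$; both are one-line arguments and equally valid.
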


\begin{proof}
(1) As $\YY=\XX\cup\YY$, we have $\CC(\YY)=\CC(\XX\cup\YY)=\CC(\XX)\cup\CC(\YY)\supseteq\CC(\XX)$ by Theorem \ref{A4}(3).

(2) The assumption implies $N\in\CC(M)$.
Hence $\CC(N)\subseteq\CC(\CC(M))=\CC(M)$ by (1) and Theorem \ref{A4}(4).

(3) Theorem \ref{A4} implies that $\CC(M)$ is closed.
Assume $\CC(M)=\XX\cup\YY$ for some closed subsets $\XX,\YY$ of $\modR$.
As $M\in\CC(M)$ by Theorem \ref{A4}(2), we may assume $M\in\XX$.
Hence $\CC(M)$ is contained in $\CC(\XX)$, which coincides with $\XX$ as $\XX$ is closed.
Therefore $\CC(M)=\XX$, which shows that $\CC(M)$ is irreducible.

(4) We can directly verify that $\CC(\XX)$ is contained in $\bigcup _{M \in \XX }  \CC (M)$.
Using (1), we obtain the equality $\CC(\XX) = \bigcup _{M \in \XX }  \CC (M)$.
It follows from (3) that the sets $\CC(M)$ are irreducible closed subsets.
\end{proof}

%%%%%%%%%%%%%%%%%%%%%%%%%%%%%%%%%%%%%%%%%%%%%%%%%%%%%
%%%%%%%%%%%%%%%%%%%%%%%%%%%%%%%%%%%%%%%%%%%%%%%%%%%%%
\section{Irreducible components of $\modR$}\label{Irreducible}

Throughout this section, we assume that $(R, \m)$ is a CM complete local ring with coefficient field $k$. 
We denote by $\CMR$ the set of isomorphism classes of MCM $R$-modules, which is a subspace of the topological space $\modR$.
In what follows, for each subset $\XX$ of $\CMR$ 
we consider the restriction of $\CC(\XX)$ to $\CMR$, namely, we investigate the subset
$$
\cm(\XX):=\CC(\XX)\cap\CMR= \{ N \in \CMR \mid M^{\oplus n} \dega N^{\oplus n} \text{ for some }M\in\XX\text{ and }n>0 \}
$$
of $\CMR$.
We set $\cm(M)=\cm(\{M\})$ for $M\in \CMR$.
Here are some basic properties.

\begin{proposition}\label{B1}
\begin{enumerate}[\rm(1)]
\item
The assignment $\XX\mapsto\cm(\XX)$ induces a Kuratowski closure operator on $\CMR$.
In particular, a subset $\XX$ of $\CMR$ is closed if and only if $\cm(\XX)=\XX$, if and only if $\XX=\cm(\YY)$ for some subset $\YY$ of $\CMR$.
\item
For subsets $\XX,\YY$ of $\CMR$ with $\XX\subseteq\YY$ it holds that $\cm(\XX)\subseteq\cm(\YY)$.
\item
Let $M,N\in\CMR$ be such that $M^{\oplus n} \dega N^{\oplus n}$ for some $n>0$.
One then has $\cm(N) \subseteq \cm(M)$.
\item
The set $\cm(M)$ is an irreducible closed subset of $\CMR$ for each $M\in\CMR$.
\item
For every subset $\XX$ of $\CMR$ there exists a decomposition $\cm(\XX) = \bigcup _{M \in \XX }  \cm (M)$ into irreducible closed subsets. 
\end{enumerate}
\end{proposition}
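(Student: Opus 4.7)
The plan is to recognize that the operator $\cm$ is precisely the subspace closure operator on $\CMR \subseteq \modR$ inherited from the topology introduced in Theorem \ref{A4}. Indeed, for every $\XX \subseteq \CMR$ the definition unpacks as $\cm(\XX) = \CC(\XX) \cap \CMR$. Since the restriction of any Kuratowski closure operator to a subspace is again a Kuratowski closure operator, part (1) is essentially formal; parts (2)--(5) should then transcribe the corresponding items of Corollary \ref{4}.

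For (1), I would check the four axioms directly. The first three---emptiness, extensivity, and finite additivity---follow from the respective items of Theorem \ref{A4} by intersecting with $\CMR$ and using that $\cap$ distributes over $\cup$. For idempotency $\cm(\cm(\XX)) = \cm(\XX)$, the inclusion $\supseteq$ is extensivity applied to $\cm(\XX)$; for $\subseteq$, I would use $\cm(\XX) \subseteq \CC(\XX)$, apply Corollary \ref{4}(1) and Theorem \ref{A4}(4) to obtain $\CC(\cm(\XX)) \subseteq \CC(\CC(\XX)) = \CC(\XX)$, and intersect with $\CMR$. The reformulations in the last sentence of (1) then follow from the standard dictionary between Kuratowski closure operators and topologies.

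Statements (2)--(5) are obtained by mirroring the proofs of Corollary \ref{4}(1)--(4), with $\CC$ replaced throughout by $\cm$ and with appeals to Theorem \ref{A4} replaced by appeals to part (1) just proved: (2) follows from finite additivity; (3) rewrites the hypothesis as $N \in \cm(M)$ and then applies monotonicity together with idempotency; (4) gets closedness from idempotency and irreducibility by placing $M$ into one of the two supposed closed components of $\cm(M)$ and applying the closure-operator identities; and (5) is immediate from the definition, with each $\cm(M)$ already known to be irreducible closed by (4). The main obstacle, if any, is a mild asymmetry: Proposition \ref{A2}(5) tells us that a module degenerating to an MCM module is itself MCM, but not the converse, so one could a priori worry about MCMness being lost along a degeneration starting in $\CMR$. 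This is absorbed painlessly by intersecting with $\CMR$ from the outset in the definition of $\cm$, and no new input is needed beyond Section \ref{Closure}.
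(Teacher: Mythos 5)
Your proposal is correct and follows essentially the same route as the paper: verify the first three Kuratowski axioms by intersecting the corresponding statements of Theorem \ref{A4} with $\CMR$, obtain idempotency from $\cm(\XX)\subseteq\CC(\XX)$ together with monotonicity and $\CC(\CC(\XX))=\CC(\XX)$ followed by intersecting with $\CMR$, and then transcribe the arguments of Corollary \ref{4} for parts (2)--(5). Your closing observation that the definition $\cm(\XX)=\CC(\XX)\cap\CMR$ already absorbs any worry about MCMness along degenerations is accurate and consistent with the paper's treatment.
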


\begin{proof}
Let $\XX,\YY$ be subsets of $\CMR$.
It is easy to observe from Theorem \ref{A4}(1)(2)(3) that $\cm(\emptyset)=\emptyset$, $\XX\subseteq\cm(\XX)$ and $\cm(\XX\cup\YY)=\cm(\XX)\cup\cm(\YY)$.
Since $\cm(\XX)\subseteq\CC(\XX)$, we have $\cm(\cm(\XX))\subseteq\cm(\CC(\XX))\subseteq\CC(\CC(\XX))=\CC(\XX)$, where the last equality follows from Theorem \ref{A4}(4).
Hence $\cm(\cm(\XX))\subseteq\CC(\XX)\cap\CMR=\cm(\XX)$, and therefore $\cm(\cm(\XX))=\cm(\XX)$.
Thus the first assertion of the proposition follows.
The remaining assertions of the proposition are shown along the same lines as in the proof of Corollary \ref{4}.
\end{proof}

For each integer $d>0$ we denote by $\X(d)$ the subset of $\CMR$ consisting of MCM modules of multiplicity $d$, that is,
$$
\X(d) = \{ M \in \CMR \mid \e(M) = d \}.
$$
Proposition \ref{A2}(6) guarantees that
\begin{equation}\label{3}
\XX\subseteq\X(d)\implies\cm(\XX)\subseteq \X(d).
\end{equation}
Hence $\X(d)$ is a variant of the module variety $\md$ defined (only) in the finite-dimensional case.

Recall that $R$ is said to have {\em finite CM representation type} if there are only a finite number of isomorphism classes of indecomposable MCM modules.
When this is the case, the topological space $\X(d)$ can be decomposed into finitely many irreducible closed subsets.

\begin{corollary}\label{B2}
Suppose that $R$ has finite CM representation type.
Then for every integer $d>0$ the topological space $\X(d)$ has a decomposition
$$
\X(d) = \bigcup _{i = 1} ^n \cm(M_i)
$$
into finitely many irreducible closed subsets, where $M_1,\dots,M_n$ are MCM $R$-modules of multiplicity $d$.
\end{corollary}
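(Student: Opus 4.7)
The plan is to show that $\X(d)$ is actually a \emph{finite set} under the given hypothesis, whence the desired decomposition drops out immediately from the closure formalism developed in Proposition \ref{B1}.

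The key reduction combines Krull--Schmidt with additivity of multiplicity. Since $R$ is complete local, Krull--Schmidt holds in $\CMR$, so every MCM module admits a decomposition
$$
M \cong N_1^{\oplus a_1} \oplus \cdots \oplus N_r^{\oplus a_r},
$$
unique up to isomorphism, where $N_1,\dots,N_r$ is a (finite, by hypothesis on representation type) complete list of indecomposable MCM $R$-modules. The Hilbert--Samuel multiplicity is a positive integer on every nonzero MCM module, and additive on short exact sequences (this is the same additivity used in Proposition \ref{A2}(6)); hence $\sum_{i=1}^r a_i\,\e(N_i) = \e(M) = d$ with each $\e(N_i)\ge 1$. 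This forces $\sum a_i \le d$, leaving only finitely many admissible tuples $(a_1,\dots,a_r)$. Therefore $\X(d)$ is a finite set.

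Enumerating its elements as $M_1,\dots,M_n$, Proposition \ref{B1}(5) applied to $\XX = \X(d)$ gives
$$
\cm(\X(d)) = \bigcup_{i=1}^n \cm(M_i),
$$
and each $\cm(M_i)$ is an irreducible closed subset of $\CMR$ by Proposition \ref{B1}(4). It remains to identify $\cm(\X(d))$ with $\X(d)$ itself: the inclusion $\X(d) \subseteq \cm(\X(d))$ is the extensivity axiom of the Kuratowski closure operator (Proposition \ref{B1}(1)), and the reverse inclusion $\cm(\X(d)) \subseteq \X(d)$ is exactly the implication \eqref{3}. Combining these yields the stated decomposition.

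I do not foresee a genuine obstacle here; the argument is almost formal, resting on the elementary observation that the multiplicity constraint bounds the Krull--Schmidt multiplicities of $M$, together with the already-verified closure formalism. The only point worth emphasizing is the use of completeness to secure Krull--Schmidt, without which finiteness of representation type would not suffice to make $\X(d)$ finite.
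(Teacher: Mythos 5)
Your proposal is correct and follows essentially the same route as the paper: both rest on the observation that finite CM representation type together with the multiplicity constraint makes $\X(d)$ a finite set, after which the decomposition is immediate from the closure formalism of Proposition \ref{B1} and the inclusion \eqref{3}. You merely spell out the finiteness step (Krull--Schmidt plus additivity and positivity of multiplicity) that the paper takes for granted.
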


\begin{proof}
As $R$ has finite CM representation type, there exist finitely many MCM $R$-modules $M_1,\dots,M_n$ of multiplicity $d$ such that $\X(d)=\{M_1,\dots,M_n\}$.
It is easy to verify by Theorem \ref{A4}(2) and \eqref{3} that $\X(d)=\bigcup _{i = 1} ^n \cm(M_i)$.
By Proposition \ref{B1}(1) each $\cm(M_i)$ is an irreducible closed subset of $\X(d)$.
\end{proof}

We give two examples of computation of the topological space $\X(d)$.

\begin{example}\label{1}
\begin{enumerate}[(1)]
\item
Let $R=k[\![ x, y]\!]/(x^3 + y^2)$. 
Then the nonisomorphic indecomposable MCM $R$-modules are $R$ and $\m=(x,y)$, which implies $\X(4) = \{R ^{\oplus 2}, R \oplus \m, \m ^{\oplus 2} \}$. 
The MCM module $R ^{\oplus 2}$ does not degenerate to $R \oplus \m$ by \cite[Proposition 5.3]{Y11}, but the exact sequence $0\to\m\to R^{\oplus2}\to\m\to0$ yields a degeneration $R^{\oplus2}\dega\m^{\oplus2}$ by Proposition \ref{A2}(2), which gives rise to a degeneration $R ^{\oplus 2} \oplus R ^{\oplus 2} \dega \m ^{\oplus 2} \oplus R^{\oplus 2}$ by Proposition \ref{A2}(3)(4).
Hence $\X(4) = \cm (R ^{\oplus 2})$. 
\item
Let $R=k[\![ x, y, z]\!]/(x^3 + yz)$. 
Then the nonisomorphic indecomposable MCM $R$-modules are $R$, $I=(x,y)$ and $J=(x^2,y)$, which implies $\X(6)=\{R^{\oplus a}\oplus I^{\oplus b}\oplus J^{\oplus c}\mid a+b+c=3\}$.
It follows from \cite[Theorem 3.1]{HY13} that $M \dega N$ if and only if there is an exact sequence $0\to A\to M\to B\to0$ of MCM modules such that $N=A\oplus B$ for all $M, N \in \CMR$.
We obtain $\X(6) = \CC (R ^{\oplus 3}) \cup \CC(R^{\oplus 2} \oplus I ) \cup \CC(R^{\oplus 2} \oplus J )$. 
See \cite[Example 3.13]{HY13}.
\end{enumerate}
\end{example}

The main result of this section is the following theorem.

\begin{theorem}\label{main}
Let $k$ be an algebraically closed uncountable field of characteristic not two.
Let $R$ be either an odd-dimensional hypersurface of type $(\A_{\infty})$ or an arbitrary-dimensional hypersurface of type $(\D_{\infty})$ over $k$. 
Then $\X (d)$ can be represented by a finite union of irreducible closed subsets for any $d>0$.
\end{theorem}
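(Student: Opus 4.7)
My plan is to combine an explicit classification of indecomposable MCM modules over $(\A_\infty)$ and $(\D_\infty)$ hypersurfaces with flat deformations of matrix factorizations, and then assemble finitely many closures $\cm(M_i)$ covering $\X(d)$.

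First I would reduce to low-dimensional base rings via Kn\"orrer periodicity. Peeling off pairs of quadratic variables in $f$, the odd-dimensional $(\A_\infty)$ case reduces to $R_0 = k[\![x_0, x_1]\!]/(x_1^2)$, and the $(\D_\infty)$ case reduces to either $k[\![x_0, x_1]\!]/(x_0^2 x_1)$ or $k[\![x_0, x_1, x_2]\!]/(x_0^2 x_1 + x_2^2)$ depending on the parity of $\dim R$. One verifies, via the matrix factorization description, that Kn\"orrer's equivalence sends flat $V$-module families in the sense of Definition \ref{A1} to flat $V$-module families, so that the induced bijection on isomorphism classes of MCM modules respects $\dega$ up to free summands; adding or removing free summands changes multiplicities in a controlled way that can be absorbed into the final bookkeeping on $\X(d)$.

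Next I would use the Buchweitz--Greuel--Schreyer classification: the indecomposable MCM modules over each reduced ring form finitely many countable families $\{N_\alpha(n)\}_{n \geq 1}$, each presented by a matrix factorization whose rank grows linearly in $n$. Consequently $\e(N_\alpha(n))$ grows at most linearly in $n$, and by Proposition \ref{A2}(6) only finitely many indecomposables from each family can occur as summands of a module in $\X(d)$; thus $\X(d)$ is populated by at most countably many isomorphism classes, each a direct sum of a bounded supply of building blocks. The core step is to exhibit, within each family, explicit short exact sequences of MCM modules such as
$$
0 \to N_\alpha(n-1) \to N_\alpha(n) \oplus R \to N_\alpha(n+1) \to 0,
$$
and invoke Proposition \ref{A2}(2) to produce degenerations $N_\alpha(n) \oplus R \dega N_\alpha(n-1) \oplus N_\alpha(n+1)$. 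Combined with Theorem \ref{T} and Proposition \ref{A2}(3), these yield $N_\alpha(1)^{\oplus c} \dega N_\alpha(n)^{\oplus c}$ for each $n$ and a suitable $c$. A finite list of modules $M_1, \ldots, M_r \in \X(d)$---each a direct sum of the family-generators $N_\alpha(1)$ and $R$ with $\e$-values summing to $d$---then satisfies $\X(d) = \bigcup_{i=1}^r \cm(M_i)$, and Proposition \ref{B1}(4) gives each $\cm(M_i)$ as an irreducible closed subset.

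The hardest step will be the third one: explicitly producing the short exact sequences (or equivalent matrix-factorization perturbations) realizing the asserted within-family degenerations. This is especially delicate in the $(\D_\infty)$ case, whose families carry a more intricate combinatorial structure than $(\A_\infty)$. A secondary difficulty is to verify that Kn\"orrer's stable equivalence genuinely preserves $\dega$ rather than merely stable isomorphism. The uncountability of $k$ enters essentially at the classification stage, ruling out sporadic MCM modules beyond the listed countable families.
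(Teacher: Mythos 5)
Your overall architecture---classify the indecomposables, reduce to low dimension via Kn\"orrer periodicity, then assemble finitely many generators $M_1,\dots,M_r$ with $\X(d)=\bigcup_i\cm(M_i)$---matches the paper's, and the Kn\"orrer step is handled there essentially as you anticipate (explicitly for dimension $3$ in Proposition \ref{B7}, and via \cite[Proposition 5.3]{HTY} for the iteration). But two steps in the middle of your argument fail. First, the claim that only finitely many indecomposables from each family can occur as summands of a module in $\X(d)$ is false: along each of the one-parameter families the matrix factorizations have bounded size and the multiplicity is \emph{constant}, e.g.\ $\e((x,y^n))=2$ for all $n$ over $k[\![x,y]\!]/(x^2)$, and $\e(M_n^+)=2$, $\e(M_n^-)=4$, $\e(I_n^\pm)=3$ for all $n$ over $k[\![x,y]\!]/(x^2y)$. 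Consequently $\X(d)$ contains infinitely many isomorphism classes, which is exactly why the theorem does not reduce to the finite-type case of Corollary \ref{B2}; an argument that starts by bounding the available indecomposables has assumed away the entire difficulty.

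Second, and more seriously, the within-family degenerations cannot be produced by short exact sequences together with Proposition \ref{A2}(2). Any degeneration obtained from \ref{A2}(2) has a decomposable target (the direct sum of the two end terms), so it can never yield $N_\alpha(1)^{\oplus c}\dega N_\alpha(n)^{\oplus c}$ with $N_\alpha(n)$ indecomposable. Your template $0\to N_\alpha(n-1)\to N_\alpha(n)\oplus R\to N_\alpha(n+1)\to 0$ is already excluded by multiplicity counts for the $(\D_\infty)$ families (constancy of $\e$ along the family forces $\e(N_\alpha)=\e(R)$, which fails for $M_n^\pm$), and even granting such sequences, iterating $N_\alpha(n)\oplus R\dega N_\alpha(n-1)\oplus N_\alpha(n+1)$ only places the \emph{pair} $N_\alpha(n-1)\oplus N_\alpha(n+1)$ in a closure, with the lower-index summand irremovably attached; since summands cannot be cancelled inside $\cm(-)$, a module such as $M_5^+\oplus(xy)\in\X(3)$ would never be covered. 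The paper's actual key input is the explicit flat $V$-families of matrix factorizations in display \eqref{6}, which realize $M_n^\pm\dega M_{n+2}^\pm$ and $I_n^\mp\dega I_{n+1}^\pm$ directly from Definition \ref{A1}; in the odd-dimensional $(\A_\infty)$ case no within-family degenerations are needed at all, since $\syz I_n\cong I_n$ gives $0\to I_n\to R^{\oplus2}\to I_n\to0$ and hence $R^{\oplus2}\dega I_n^{\oplus2}$, placing every $I_n$ in $\cm(R)$. You would need to supply these explicit one-parameter families (or an equivalent construction of the $V$-flat module $Q$ of Definition \ref{A1}) to close the gap.
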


In the case where $R$ is a finite-dimensional $k$-algebra, the module variety $\md$ can always be described as a finite union of irreducible closed subsets, since it is an (affine) algebraic set. 
The topological space $\X(d)$ (and hence $\CMR$) is not even noetherian in general. 

\begin{example}
Let $k$ be algebraically closed.
Let $R=k[\![x,y]\!]/(x^2)$.
Then there exists a descending chain
$$
\cm(R) \supsetneq \cm((x, y^2)) \supsetneq \cm((x, y^4)) \supsetneq \cdots \supsetneq \cm((x, y^{2n})) \supsetneq \cm((x, y^{2n +2})) \supsetneq \cdots
$$
of irreducible closed subsets; see \cite[Theorem 1.1]{HTY} and Proposition \ref{B1}(3)(4).
\end{example}

From now on, we show several results to give a proof of Theorem \ref{main}.
We begin with a lemma.

\begin{lemma}\label{5}
Let $M,N\in\CMR$.
For all $A\in\cm(M)$ and $B\in\cm(N)$ one has $A\oplus B\in\cm(M\oplus N)$.
\end{lemma}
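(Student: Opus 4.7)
The plan is to unwind the definitions and repeatedly apply Proposition \ref{A2}(3) (direct sums of degenerations). By hypothesis, $A\in\cm(M)$ supplies an integer $n>0$ with $M^{\oplus n}\dega A^{\oplus n}$, and $B\in\cm(N)$ supplies an integer $m>0$ with $N^{\oplus m}\dega B^{\oplus m}$. To fit the definition of $\cm(M\oplus N)$, I need a single integer $k>0$ with $(M\oplus N)^{\oplus k}\dega (A\oplus B)^{\oplus k}$, together with the fact that $A\oplus B$ is MCM (which is automatic because $A,B\in\CMR$).

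The natural choice is $k=nm$. First I would promote the two given degenerations to the common exponent $nm$: applying Proposition \ref{A2}(3) inductively $m$ times to $M^{\oplus n}\dega A^{\oplus n}$ yields $M^{\oplus nm}\dega A^{\oplus nm}$, and similarly $n$ applications to $N^{\oplus m}\dega B^{\oplus m}$ yield $N^{\oplus nm}\dega B^{\oplus nm}$. Then one more application of Proposition \ref{A2}(3) combines them into
\[
M^{\oplus nm}\oplus N^{\oplus nm}\dega A^{\oplus nm}\oplus B^{\oplus nm},
\]
which is exactly $(M\oplus N)^{\oplus nm}\dega (A\oplus B)^{\oplus nm}$. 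Since $A\oplus B$ is MCM, this places $A\oplus B$ in $\cm(M\oplus N)$, as required.

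There is no real obstacle here: the statement is essentially bookkeeping of exponents, and the transitivity issue that motivated Theorem \ref{T} does not enter, because we only need to combine two degenerations in parallel rather than in series. The one point worth stating clearly in the write-up is the reindexing step that brings $n$ and $m$ to a common value, so that Proposition \ref{A2}(3) can be applied to produce a direct-sum degeneration with matching multiplicities on both sides.
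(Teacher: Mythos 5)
Your proof is correct and follows exactly the paper's argument: extract exponents $n$ and $m$ from the two memberships, pass to the common exponent $nm$ via repeated use of Proposition \ref{A2}(3), and combine the two degenerations into $(M\oplus N)^{\oplus nm}\dega(A\oplus B)^{\oplus nm}$. No discrepancies.
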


\begin{proof}
There exist degenerations $M^{\oplus a}\dega A^{\oplus a}$ and $N^{\oplus b}\dega B^{\oplus b}$, where $a,b$ are positive integers.
Applying Proposition \ref{A2}(3) a couple of times, we get degenerations $M^{\oplus ab}\dega A^{\oplus ab}$ and $N^{\oplus ab}\dega B^{\oplus ab}$, and obtain a degeneration $(M\oplus N)^{\oplus ab}\dega(A\oplus B)^{\oplus ab}$.
Hence $A\oplus B\in\cm(M\oplus N)$.
\end{proof}

For a finitely generated $R$-module $M$, we denote by $\syz M$ the (first) {\em syzygy} of $M$, i.e., the image of the first differential map in the minimal free resolution of $M$.
The following proposition is a more precise version of Theorem \ref{main} in the case where the ring $R$ is either a $1$-dimensional hypersurface of type $(\A_{\infty})$ or a $2$-dimensional hypersurface of type $(\D_{\infty})$.

\begin{proposition}\label{B4}
Let $d>0$ be an integer.
\begin{enumerate}[\rm(1)]
\item
Let $R=k[\![x,y]\!]/(x^2)$. 
Then $\X(d) = \begin{cases}
\cm(R^{\oplus \frac{d-1}{2}} \oplus(x)) & \text{if $d$ is odd}, \\
\cm(R^{\oplus \frac{d}{2}} ) & \text{if $d$ is even}.
\end{cases}$
\item
Let $R= k[\![x, y, z]\!]/(x^2 y + z^2)$.  
Then $\X(d)=\begin{cases}\cm(R^{\oplus\frac{d}{2}}) & \text{if $d$ is even},\\
\emptyset & \text{if $d$ is odd}.\end{cases}$
\end{enumerate}
\end{proposition}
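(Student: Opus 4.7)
The plan is to prove both parts by reducing to the classification of indecomposable MCM $R$-modules and, for each indecomposable $M$, producing a short exact sequence that exhibits $M$ (possibly together with a companion summand) as a degeneration of a free module of the same multiplicity; the overall conclusion is then assembled via Lemma \ref{5}. The inclusions $\cm(R^{\oplus d/2})\subseteq\X(d)$ and $\cm(R^{\oplus(d-1)/2}\oplus(x))\subseteq\X(d)$ are immediate from \eqref{3} since degeneration preserves multiplicity, so the real work is the reverse direction.

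For part (1), the indecomposable MCM modules over $R=k[\![x,y]\!]/(x^2)$ are $R$, $(x)$, and the ideals $(x,y^n)$ for $n\ge 1$, of multiplicities $2,1,2,2,\dots$. Since the annihilator of $x$ in $R$ is $(x)$, one has $\syz(x)\cong(x)$, and so the exact sequence $0\to(x)\to R\to(x)\to 0$ combined with Proposition \ref{A2}(2) yields $R\dega(x)^{\oplus 2}$. An analogous direct computation on the presentation $(a,b)\mapsto ay^n+bx\colon R^{\oplus 2}\to(x,y^n)$ shows $\syz(x,y^n)\cong(x,y^n)$, giving $R^{\oplus 2}\dega(x,y^n)^{\oplus 2}$; hence $(x)^{\oplus 2},(x,y^n)\in\cm(R)$. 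Any $M\in\X(d)$ decomposes as $R^{\oplus a}\oplus(x)^{\oplus b}\oplus\bigoplus_j(x,y^{n_j})^{\oplus c_j}$ with $2a+b+2\sum c_j=d$, forcing $b\equiv d\pmod 2$. Pairing copies of $(x)$ two at a time (and isolating a single copy when $b$ is odd), Lemma \ref{5} delivers $M\in\cm(R^{\oplus d/2})$ when $d$ is even and $M\in\cm(R^{\oplus(d-1)/2}\oplus(x))$ when $d$ is odd.

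For part (2), $R=k[\![x,y,z]\!]/(x^2y+z^2)$ is a $2$-dimensional local domain, since $x^2y+z^2$ is irreducible in $k[\![x,y,z]\!]$, so every MCM module $M$ is torsion-free of some positive rank $r$, and the identity $\e(M)=r\cdot\e(R)=2r$ immediately gives $\X(d)=\emptyset$ for odd $d$. For even $d=2r$, by Lemma \ref{5} it suffices to show that every indecomposable MCM module $M$ lies in $\cm(R)$. Invoking the matrix-factorisation classification for the $(\D_\infty)$ hypersurface, each such $M$ has rank $1$ and a two-generator presentation whose syzygy $\syz M$ is again one of the listed indecomposables; the resulting sequence $0\to\syz M\to R^{\oplus 2}\to M\to 0$ together with Proposition \ref{A2}(2) gives $R^{\oplus 2}\dega\syz M\oplus M$. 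As a prototype one verifies that $(x,z)$ is self-syzygy via the matrix factorisation $\bigl(\begin{smallmatrix}z&xy\\-x&z\end{smallmatrix}\bigr)\cdot\bigl(\begin{smallmatrix}z&-xy\\x&z\end{smallmatrix}\bigr)=(x^2y+z^2)I$, so $R^{\oplus 2}\dega(x,z)^{\oplus 2}$ and $(x,z)\in\cm(R)$. The remaining indecomposables are handled by the same template: either $M$ is self-syzygy (giving $R^{\oplus 2}\dega M^{\oplus 2}$ directly) or $\{M,\syz M\}$ forms a $2$-cycle, in which case Theorem \ref{T} and Proposition \ref{B1}(3) are used to combine the two resulting degenerations into $M\in\cm(R)$.

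The main obstacle is the case-by-case verification in part (2): enumerating the indecomposable MCM modules over the $(\D_\infty)$ hypersurface via matrix factorisations and checking in each case that the syzygy structure forces $M\in\cm(R)$, in particular handling the $2$-cycle pairs where $\syz M\not\cong M$. Part (1) is significantly cleaner because each $(x,y^n)$ happens to be self-syzygy and the only multiplicity-$1$ indecomposable is $(x)$, so the parity combinatorics is immediate.
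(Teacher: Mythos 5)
Your part (1) is correct and follows the paper's proof essentially verbatim: list the indecomposables $R$, $(x)$, $I_n=(x,y^n)$, use the self-syzygy exact sequences together with Proposition \ref{A2}(2) to place $(x)^{\oplus2}$ and $I_n$ in $\cm(R)$, and finish with Lemma \ref{5} and the parity of the number of copies of $(x)$. Your observation in part (2) that $R$ is a domain, so $\e(M)=\mathrm{rank}(M)\cdot\e(R)=2\,\mathrm{rank}(M)$ and hence $\X(d)=\emptyset$ for odd $d$, is a clean and valid alternative to the paper's route (which instead just computes the multiplicities of all indecomposables from the classification and observes they are even).

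Part (2) nevertheless contains a genuine gap. You reduce the even case to showing that \emph{every} indecomposable MCM module lies in $\cm(R)$, and you assert that each indecomposable has rank one and a two-generator presentation. Both claims are false: by the classification you invoke (\cite[Proposition 14.19]{LW12}, quoted in the paper), besides $R$, $I=(x,z)$ and $J=(y,z)$ there are two infinite families $M_n$, $N_n$ presented by $4\times4$ matrix factorizations; these have rank $2$ and $\e(M_n)=\e(N_n)=4$. Since $\cm(R)\subseteq\X(2)$ by \eqref{3}, the modules $M_n$ and $N_n$ cannot belong to $\cm(R)$, so your ``template'' ($R^{\oplus2}\dega M^{\oplus2}$ from a two-generator self-syzygy presentation) does not apply to them, and the reduction as stated is unattainable. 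The correct version, which is what the paper proves, is that $M_n\cong\syz M_n$ gives $0\to M_n\to R^{\oplus4}\to M_n\to0$, hence $R^{\oplus4}\dega M_n^{\oplus2}$ and $M_n\in\cm(R^{\oplus2})$ (likewise for $N_n$); the final bookkeeping then reads $X=R^{\oplus a}\oplus I^{\oplus b}\oplus J^{\oplus c}\oplus(\bigoplus_iM_{p_i}^{\oplus d_i})\oplus(\bigoplus_jN_{q_j}^{\oplus e_j})\in\cm(R^{\oplus a+b+c+2\sum_id_i+2\sum_je_j})$ with $d=2(a+b+c+2\sum_id_i+2\sum_je_j)$, which still lands in $\cm(R^{\oplus d/2})$. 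The repair is routine, but it occurs exactly in the case-by-case verification you deferred as ``the main obstacle,'' so the argument as written would break there; note also that all indecomposables in this classification are self-syzygy, so the $2$-cycle branch of your template is vacuous here.
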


\begin{proof}
(1) The nonisomorphic indecomposable MCM $R$-modules are
$$
R,\,
(x)\cong R/(x),\,
I_n = (x,y^n)\ (n \geq 1);
$$
see \cite[Proposition 4.1]{BGS87}.
There are isomorphisms $(x)\cong\syz(x)$ and $I_n\cong\syz I_n$, which gives exact sequences $0\to(x)\to R\to(x)\to0$ and $0\to I_n\to R^{\oplus2}\to I_n\to0$.
It is observed from Proposition \ref{A2}(2) that $R,(x)^{\oplus2},I_n$ belong to $\cm(R)$.
Pick any $X\in\X(d)$ and write
$$
X=R^{\oplus a}\oplus(x)^{\oplus b}\oplus I_{l_1}^{\oplus c_1}\oplus\cdots\oplus I_{l_m}^{\oplus c_m}.
$$
As $\e(R)=\e(I_n)=2$ and $\e((x))=\e(R/(x))=1$, we have $d=\e(X)=2(a+c_1+\cdots+c_m)+b$.
Assume that $d$ is odd (resp. even).
Then $b=2r-1$ (resp. $b=2r$) for some $r\ge1$, and $d=2(a+c_1+\cdots+c_m+r)-1$ (resp. $d=2(a+c_1+\cdots+c_m+r)$).
Applying Lemma \ref{5}, we see that $X$ belongs to $\cm(R^{\oplus a+c_1+\cdots+c_m+r}\oplus (x))$ (resp. $\cm(R^{\oplus a+c_1+\cdots+c_m+r})$).
Therefore the left-hand side of the equality in the assertion is contained in the right-hand side.
The opposite inclusion is easily seen by using \eqref{3}.

(2) The nonisomorphic indecomposable MCM $R$-modules are
$$
R,\,
I=(x,z),\,
J=(y,z),\,
M_n =\Coker \left( \begin{smallmatrix} x&y^n&z&0\\0&-x&0&z\\ -z& 0& xy&y^{n+1}\\ 0& -z& 0& -xy\end{smallmatrix}\right),\,
N_n = \Coker \left( \begin{smallmatrix} x&y^n&z&0\\0&-xy&0&z\\ -z& 0& xy&y^n\\ 0& -z& 0& -x\end{smallmatrix}\right)\,(n\ge1);
$$
see \cite[Proposition 14.19]{LW12}. 
Note that $I \cong \Omega I$, $J \cong \Omega J$, $M_n \cong \Omega M_n$ and $N_n \cong \Omega N_n$.
We make a similar argument as in the proof of (1).
By Proposition \ref{A2}(2) we have
$$
I,J\in\cm(R),\quad
M_n,N_n\in\CC(R ^{\oplus 2}),\quad
\e(R)=\e(I)=\e(J)=2,\quad
\e(M_n)=\e(N_n)=4.
$$
(Hence every MCM module has even multiplicity.)
Take any module $X\in\X(d)$, and write
$$
\textstyle
X=R^{\oplus a}\oplus I^{\oplus b}\oplus J^{\oplus c}\oplus(\bigoplus_iM_{p_i}^{\oplus d_i})\oplus(\bigoplus_jN_{q_j}^{\oplus e_j}).
$$
Then $d=\e(X)=2(a+b+c+2\sum_id_i+2\sum_je_j)$, and $X\in\cm(R^{\oplus a+b+c+2\sum_id_i+2\sum_je_j})$ by Lemma \ref{5}.
Finally, by using \eqref{3}, the assertion follows.
\end{proof}

The following proposition is nothing but Theorem \ref{main} in the case where the ring $R$ is a $1$-dimensional hypersurface of type $(\D_{\infty})$.
The proof uses matrix factorizations. 
We refer to \cite[$\S$ 7]{Y} for the details. 

\begin{proposition}\label{B5}
Let $R =k[\![x,y]\!]/(x^2y)$. 
Then for each integer $d>0$ one has $\X(d)=\cm(\XX)$, where $\XX$ is the set of isomorphism classes of modules of the form
$$
R^{\oplus l_1}\oplus(xy)^{\oplus l_2}\oplus(x)^{\oplus l_3}\oplus(y)^{\oplus l_4}\oplus(x^2)^{\oplus l_5}\oplus(\Coker \left( \begin{smallmatrix} x & y \\ 0& -x\end{smallmatrix}\right))^{\oplus l_6}\oplus(\Coker \left( \begin{smallmatrix} xy & y^2 \\ 0& -xy\end{smallmatrix}\right))^{\oplus l_7},
$$
where $d=3l_1 + l_2 + 2l_3 + 2l_4+ l_5 + 2l_6+4l_7$.
In particular, the topological space $\X(d)$ is a finite union of irreducible closed subsets.
\end{proposition}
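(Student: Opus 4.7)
My plan is to follow the template of Proposition \ref{B4}, now using matrix factorizations of $f=x^2y$ over the $(\D_\infty)$ curve singularity $R=k[\![x,y]\!]/(x^2y)$, as developed in \cite[\S7]{Y}. The size-1 factorizations of $f$ yield $R$ and the four principal ideals $(x),(y),(x^2),(xy)$, while the indecomposable size-2 factorizations produce two infinite families $\varphi_n=\left(\begin{smallmatrix}x&y^n\\0&-x\end{smallmatrix}\right)$, $\psi_n=\left(\begin{smallmatrix}xy&y^{n+1}\\0&-xy\end{smallmatrix}\right)$ for $n\ge1$ with $\varphi_n\psi_n=x^2y\cdot I$, whose cokernels I denote $M_n$ and $N_n$. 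The modules $M_1$ and $N_1$ are precisely the two size-2 summands listed in $\XX$. The first step is the multiplicity computation: using additivity along $0\to(xy)\to R\to(x)\to0$ and $0\to(x^2)\to R\to(y)\to0$, and the filtration $0\subset Re_1\subset M_n$ (both subquotients $\cong(xy)$), one obtains $\e(R)=3$, $\e((x))=\e((y))=2$, $\e((xy))=\e((x^2))=1$, $\e(M_n)=2$, $\e(N_n)=4$ for all $n$. The easy inclusion $\cm(\XX)\subseteq\X(d)$ follows from Proposition \ref{A2}(6) and \eqref{3}.

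The core step is to prove $M_1\dega M_n$ and $N_1\dega N_n$ for $n\ge2$ via a one-parameter family. Over $R\otimes_kV$ (with $V=k[t]_{(t)}$) I would consider the matrix factorization $\varphi^{(t)}=\left(\begin{smallmatrix}x&ty+y^n\\0&-x\end{smallmatrix}\right)$, $\psi^{(t)}=\left(\begin{smallmatrix}xy&ty^2+y^{n+1}\\0&-xy\end{smallmatrix}\right)$; a direct check gives $\varphi^{(t)}\psi^{(t)}=x^2y\cdot I$. The cokernel $Q=\Coker\varphi^{(t)}$ is then $V$-flat by the standard theory of matrix factorizations over families, and $Q/tQ\cong\Coker\varphi_n=M_n$. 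Over $K=k(t)$, the element $1+t^{-1}y^{n-1}$ is a unit, so the base change $e_1\mapsto(1+t^{-1}y^{n-1})e_1$, $e_2\mapsto t^{-1}e_2$ transforms $\varphi^{(t)}$ into $\varphi_1$ and gives $Q_t\cong M_1\otimes_kK$. By Definition \ref{A1} this yields $M_1\dega M_n$, and Proposition \ref{B1}(3) gives $\cm(M_n)\subseteq\cm(M_1)$; the analogous construction with $\psi^{(t)}$ yields $N_1\dega N_n$ and $\cm(N_n)\subseteq\cm(N_1)$.

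Finally, for any $X\in\X(d)$ I would decompose $X$ into indecomposables, let $l_1,\ldots,l_5$ be the multiplicities of $R,(xy),(x),(y),(x^2)$ respectively, and let $l_6,l_7$ be the total number of $M_n$ and $N_n$ summands appearing. Combining Lemma \ref{5} with the degenerations of the second step places $X$ in $\cm(M)$ for the corresponding $M\in\XX$, establishing $\X(d)\subseteq\cm(\XX)$. Proposition \ref{B1}(5) writes $\cm(\XX)=\bigcup_{M\in\XX}\cm(M)$ as a union of irreducible closed subsets, and this union is finite since the equation $d=3l_1+l_2+2l_3+2l_4+l_5+2l_6+4l_7$ has only finitely many nonnegative integer solutions. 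The hard part will be the generic-fiber calculation: one must verify carefully that the prescribed base change over $K$ genuinely identifies $Q_t$ with $M_1\otimes_kK$ as $R\otimes_kK$-modules (hinging on $1+t^{-1}y^{n-1}$ being a unit only after inverting $t$), and that $V$-flatness of $Q$ indeed falls out of the matrix-factorization framework.
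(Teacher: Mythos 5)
Your overall strategy---classify the indecomposables, compute multiplicities, exhibit one-parameter families of matrix factorizations pushing every indecomposable into $\cm$ of a fixed finite list, then finish with Lemma \ref{5} and \eqref{3}---is the same as the paper's, but two of your inputs are defective. First, your classification of the indecomposable MCM modules over $k[\![x,y]\!]/(x^2y)$ is incomplete: by \cite[Proposition 4.2]{BGS87} there are, besides $R$, the ideals $(x),(y),(x^2),(xy)$ and the two cokernel families $M_n^+=\Coker\left(\begin{smallmatrix}x&y^n\\0&-x\end{smallmatrix}\right)$, $M_n^-=\Coker\left(\begin{smallmatrix}xy&y^{n+1}\\0&-xy\end{smallmatrix}\right)$, also the two families of ideals $I_n^+=(x,y^n)$ and $I_n^-=(xy,y^n)$ for $n\ge1$, each of multiplicity $3=\e(R)$. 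Your argument says nothing about the $I_n^{\pm}$, so the inclusion $\X(d)\subseteq\cm(\XX)$ is not established. The paper handles them by constructing degenerations $I_n^{\mp}\dega I_{n+1}^{\pm}$ starting from $I_0^{+}=I_0^{-}=R$, which is precisely why the summand $R^{\oplus l_1}$ contributes $3l_1$ to $d$.

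Second, the key degeneration step fails as written. Your family $\varphi^{(t)}=\left(\begin{smallmatrix}x&ty+y^n\\0&-x\end{smallmatrix}\right)$ does give a $V$-flat module with special fibre $M_n$, but the generic-fibre identification $Q_t\cong M_1\otimes_kK$ hinges on $1+t^{-1}y^{n-1}$ (equivalently $t+y^{n-1}$) being a unit of $R\otimes_kK$, and it is not: $R\otimes_kK$ is not complete, and the only candidate inverse, the geometric series $\sum_{i\ge0}(-1)^it^{-i}y^{i(n-1)}$, has coefficients spanning an infinite-dimensional $k$-subspace of $K$ and hence does not lie in $R\otimes_kK$ (check the image in $R/(x)\otimes_kK=k[\![y]\!]\otimes_kK$). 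This is exactly the point you flagged as ``the hard part,'' and it genuinely breaks; moreover a one-step degeneration such as $M_1^{+}\dega M_2^{+}$ should not be expected in view of the parity obstructions for $(\A_\infty)$ in \cite{HTY}. The paper instead uses the families \eqref{6}, e.g.\ $\left(\begin{smallmatrix}x+ty^{n+1}&y^{n+2}\\t^2y^n&-x+ty^{n+1}\end{smallmatrix}\right)$, which realize only the step-two degenerations $M_n^{\pm}\dega M_{n+2}^{\pm}$; there the comparison morphism of matrix factorizations is $\left(\begin{smallmatrix}0&-1\\t^2&-ty\end{smallmatrix}\right)$, whose determinant $t^2$ is a unit once $t$ is inverted, so no completion is needed. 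Both parities are then covered by starting the chains at $M_1^{\pm}$ and at $M_0^{+}\cong(y)$, $M_0^{-}\cong R\oplus(x^2)$, which is why $(y)$, $(x^2)$ and $M_1^{\pm}$ all appear in $\XX$. To repair your proof you would need to replace your families by ones of this kind and add the treatment of $I_n^{\pm}$; the rest of your outline (multiplicity count, Lemma \ref{5}, finiteness of the solution set of $d=3l_1+l_2+2l_3+2l_4+l_5+2l_6+4l_7$, and Proposition \ref{B1}) matches the paper.
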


\begin{proof}
We prove the proposition similarly as in the proof of Proposition \ref{B4}.
By \cite[Proposition 4.2]{BGS87}, the nonisomorphic indecomposable MCM $R$-module are
\begin{align*}
&R,\,
(xy)\cong R/(x),\,
(x)\cong R/(xy),\,
(y)\cong R/(x^2),\,
(x^2)\cong R/(y),\\
&M_n ^{+} = \Coker \left( \begin{smallmatrix} x & y^n \\ 0& -x\end{smallmatrix}\right),\,
M_n ^{-} = \Coker \left( \begin{smallmatrix} xy & y^{n+1} \\ 0& -xy\end{smallmatrix}\right),\,
I_n ^{+} = (x,y^n),\,
I_n ^{-} = (xy,y^n)\ (n\ge1).
\end{align*}
It is seen that $\Omega M_n ^{\pm} \cong M_n ^{\mp}$ and $\Omega I_n ^{\pm} \cong I_n ^{\mp}$.
Moreover, we have
\begin{align*}
&\e(R/(x)) = \e(R/(y)) = 1,\quad
\e(R/(xy))=\e(R/(x^2)) = \e(M_n ^+ ) = 2,\\
&\e(R)= \e(I_n ^+ ) = \e(I_n ^-) = 3,\quad
\e(M_n ^-) = 4.
\end{align*}
In fact, for instance, since there exist a short exact sequence $0 \to R/(x) \to M_n ^+ \to R/(x) \to 0$ (see \cite[Proposition 2.1]{AIT12}), we have $\e (M_n ^+) = \e(R/(x))+\e(R/(x))=2$.
Now, let $n\ge0$.
Set $V=k[t]_{(t)}$ and consider the $R \otimes _kV$-modules $Q_{n} ^{+},Q_{n} ^{-},P_{n} ^{+},P_{n} ^{-}$ whose presentation matrices are
\begin{equation}\label{6}
\left( \begin{smallmatrix} x + ty^{n+1}& y^{n+2} \\ t^2y^n & -x + ty^{n+1} \end{smallmatrix}\right),\, 
\left( \begin{smallmatrix} xy - ty^{n+2}& y^{n+3} \\ t^2 y^{n+1} & -xy - ty^{n+2} \end{smallmatrix}\right),\, 
\left( \begin{smallmatrix} x + ty^{n}& t^2y^{n} \\ y^{n+1} & -xy + ty^{n+1} \end{smallmatrix}\right),\, 
\left( \begin{smallmatrix} xy - ty^{n+1}& t^2 y^{n} \\ y^{n+1} & -x - ty^{n} \end{smallmatrix}\right)
\end{equation}
respectively. 
Then $Q_{n} ^{\pm}$ and $P_{n} ^{\pm}$ give the degenerations
\begin{equation}\label{100}
M_n ^{\pm} \dega M_{n+2}^{\pm},\quad
I_n ^{\mp} \dega I_{n+1}^{\pm}\quad(n\ge0)
\end{equation}
respectively. 
Indeed,
$$
(\left( \begin{smallmatrix} x + ty^{n+1}& y^{n+2} \\ t^2y^n & -x + ty^{n+1} \end{smallmatrix}\right),\, 
\left( \begin{smallmatrix} xy - ty^{n+2}& y^{n+3} \\ t^2 y^{n+1} & -xy - ty^{n+2} \end{smallmatrix}\right) ),\qquad
(\left( \begin{smallmatrix} x + ty^{n}& t^2y^{n} \\ y^{n+1} & -xy + ty^{n+1} \end{smallmatrix}\right),\, 
\left( \begin{smallmatrix} xy - ty^{n+1}& t^2 y^{n} \\ y^{n+1} & -x - ty^{n} \end{smallmatrix}\right))
$$
are matrix factorizations of $x^2y$. 
Thus $Q_{n} ^{\pm}$ and $P_{n} ^{\pm}$ are MCM $R \otimes _k V$-modules, which are $V$-flat. 
We also have morphisms of matrix factorizations
\begin{align*}
(\left( \begin{smallmatrix} 0 &-1 \\ t^2 & -ty \end{smallmatrix}\right),\left( \begin{smallmatrix} 0 &1 \\ -t^2 & -ty \end{smallmatrix}\right)  ) &: (\left( \begin{smallmatrix} x + ty^{n+1}& y^{n+2} \\ t^2y^n & -x + ty^{n+1} \end{smallmatrix}\right),\, 
\left( \begin{smallmatrix} xy - ty^{n+2}& y^{n+3} \\ t^2 y^{n+1} & -xy - ty^{n+2} \end{smallmatrix}\right) \to (\left( \begin{smallmatrix} x &y^n \\ 0 & -x \end{smallmatrix}\right),\left( \begin{smallmatrix} xy &y^{n+1} \\ 0 & -xy \end{smallmatrix}\right)),
\\
(\left( \begin{smallmatrix} 1 &0 \\ -ty & t^2 \end{smallmatrix}\right),\left( \begin{smallmatrix} 1 &0 \\ t & t^2 \end{smallmatrix}\right)  ) &: (\left( \begin{smallmatrix} x + ty^{n}& t^2y^{n} \\ y^{n+1} & -xy + ty^{n+1} \end{smallmatrix}\right),\, 
\left( \begin{smallmatrix} xy - ty^{n+1}& t^2 y^{n} \\ y^{n+1} & -x - ty^{n} \end{smallmatrix}\right)) \to (\left( \begin{smallmatrix} x &y^n \\ 0 & -xy \end{smallmatrix}\right),\left( \begin{smallmatrix} xy &y^{n} \\ 0 & -x \end{smallmatrix}\right)).
\end{align*}
Since these are isomorphisms if $t$ is invertible, there are $R \otimes_k K$-isomorphisms $(Q_{n} ^{\pm})_t \cong M_{n}^{\pm} \otimes _k K$ and $(P_{n} ^{\pm})_t \cong I_{n}^{\pm} \otimes _k K$. 
It is clear that $Q_{n} ^{\pm} \otimes _k V/tV \cong M_{n+2} ^{\pm}$ and $P_{n} ^{\pm} \otimes _k V/tV \cong I_{n+1} ^{\pm}$. 
Thus we obtain the degenerations \eqref{100}.
Hence each indecomposable MCM module belongs to
$$
\cm(\{R,\,(xy),\,(x),\,(y),\,(x^2),\,M_1^+,\,M_1^-,\,M_0^+,\,M_0 ^-,\,I_0^+,\,I_0^- \}).
$$
Note here that $M_0 ^{+} \cong R/(x^2)$, $M_0 ^{-} \cong R \oplus R/(y)$ and $I_0 ^{+}= I_0 ^{-}= R$. 
For each $X\in\X(d)$ there exist integers $l_1,\dots,l_7$ with $d=3l_1 + l_2 + 2l_3 + 2l_4+ l_5 + 2l_6+4l_7$ such that
$$
X\in\cm(R^{\oplus l_1}\oplus(xy)^{\oplus l_2}\oplus(x)^{\oplus l_3}\oplus(y)^{\oplus l_4}\oplus(x^2)^{\oplus l_5}\oplus(M_1^+)^{\oplus l_6}\oplus(M_1^-)^{\oplus l_7}).
$$
Now the proof of the proposition is completed.
\end{proof}

In the proof of Proposition \ref{B5}, we construct $R \otimes_k V$-modules $Q_n^{\pm}$ and $P_n ^{\pm}$ concretely. 
Using them, we can also show the case where the ring $R$ is a $3$-dimensional hypersurface of type $(\D_{\infty})$.
Let $R=S/(f)$ be a hypersurface. 
Kn\"{o}rrer's periodicity theorem \cite[\S 12]{Y04} gives rise to the functor $(-)^{\sharp \sharp} :\CM (R) \to \CM (\Rss )$, where $\Rss = S[\![u, v]\!]/(f + u^2 + v^2)$. 
We call this functor {\em Kn\"{o}rrer's periodicity functor}. 

\begin{proposition}\label{B7}
Let $R^{\sharp \sharp} =k[\![x,y, u, v]\!]/(x^2y + u ^2 + v^2)$. 
Then for each integer $d>0$ one has $\X(d)=\cm(\XX)$, where $\XX$ is the set of isomorphism classes of modules of the form
\begin{multline*}
(R^{\sharp \sharp})^{\oplus l_1}\oplus \{ (xy)^{\sharp \sharp}\} ^{\oplus l_2}\oplus \{ (x)^{\sharp \sharp}\} ^{\oplus l_3}\oplus \{ (y)^{\sharp \sharp}\} ^{\oplus l_4}\oplus \{ (x^2)^{\sharp \sharp}\} ^{\oplus l_5}\\
\oplus \{ (\Coker \left( \begin{smallmatrix} x & y \\ 0& -x\end{smallmatrix}\right))^{\sharp \sharp}\} ^{\oplus l_6}\oplus \{ (\Coker \left( \begin{smallmatrix} xy & y^2 \\ 0& -xy\end{smallmatrix}\right))^{\sharp \sharp}\} ^{\oplus l_7}
\end{multline*}
with $d=2l_1 + 2l_2 + 2l_3 + 2l_4+ 2l_5 + 4l_6+4l_7$, where $(xy)$, $(x)$, $(y)$, $(x^2)$, $\Coker \left( \begin{smallmatrix} x & y \\ 0& -x\end{smallmatrix}\right)$ and $\Coker \left( \begin{smallmatrix} xy & y^2 \\ 0& -xy\end{smallmatrix}\right)$ are the MCM $R$-modules given in Proposition \ref{B5}.
In particular, the topological space $\X(d)$ is a finite union of irreducible closed subsets. 
\end{proposition}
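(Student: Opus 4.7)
The plan is to transport the proof of Proposition \ref{B5} to $\Rss$ via Kn\"orrer's periodicity functor $(-)^{\sharp\sharp}:\CM(R)\to\CM(\Rss)$, where $R=k[\![x,y]\!]/(x^2y)$. Since $\mathrm{char}\,k\ne2$ and $k$ is algebraically closed, Kn\"orrer's theorem gives an equivalence of stable categories, so the indecomposable MCM $\Rss$-modules are precisely the Kn\"orrer images $N^{\sharp\sharp}$ of the indecomposable MCM $R$-modules listed in Proposition \ref{B5}, together with the identifications $(M_0^+)^{\sharp\sharp}\cong(R/(x^2))^{\sharp\sharp}$, $(M_0^-)^{\sharp\sharp}\cong\Rss\oplus(R/(y))^{\sharp\sharp}$ and $(I_0^\pm)^{\sharp\sharp}\cong\Rss$ inherited from the $R$-side.

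The key technical step is to lift the degenerations \eqref{100} to $\Rss$. The pairs of matrices displayed in \eqref{6} are matrix factorizations of $x^2y$ over $V[\![x,y]\!]$ whose cokernels are the $R\otimes_kV$-modules $Q_n^\pm,P_n^\pm$. Applying Kn\"orrer's construction entry-wise over $V[\![x,y]\!]$ produces matrix factorizations of $x^2y+u^2+v^2$ over $V[\![x,y,u,v]\!]$, whose cokernels I denote $(Q_n^\pm)^{\sharp\sharp}$ and $(P_n^\pm)^{\sharp\sharp}$. Since Kn\"orrer's construction commutes with base change along $V\to V/tV$ and localization $V\to K$, and preserves flatness, these cokernels are $V$-flat $\Rss\otimes_kV$-modules with the correct generic and special fibres; moreover the morphisms of matrix factorizations given in the proof of \ref{B5} lift to morphisms of the Kn\"orrer-doubled factorizations which remain isomorphisms after inverting $t$. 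This yields the degenerations
$$
(M_n^\pm)^{\sharp\sharp}\dega(M_{n+2}^\pm)^{\sharp\sharp},\qquad
(I_n^\mp)^{\sharp\sharp}\dega(I_{n+1}^\pm)^{\sharp\sharp}\qquad(n\ge0),
$$
so every indecomposable MCM $\Rss$-module belongs to $\cm(\{\Rss,(xy)^{\sharp\sharp},(x)^{\sharp\sharp},(y)^{\sharp\sharp},(x^2)^{\sharp\sharp},(M_1^+)^{\sharp\sharp},(M_1^-)^{\sharp\sharp}\})$.

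The remaining step is to compute the multiplicities of the seven basic modules directly from their matrix-factorization presentations (using additivity of $\e$ on short exact sequences analogous to those in the proof of \ref{B5}), obtaining $\e(\Rss)=\e((xy)^{\sharp\sharp})=\e((x)^{\sharp\sharp})=\e((y)^{\sharp\sharp})=\e((x^2)^{\sharp\sharp})=2$ and $\e((M_1^+)^{\sharp\sharp})=\e((M_1^-)^{\sharp\sharp})=4$. Given these values, any $X\in\X(d)$ decomposes into indecomposables $N^{\sharp\sharp}$; by the previous step each summand lies in the $\cm$-closure of one of the seven basic modules, and Lemma \ref{5} collects these into a single $\cm$-class of a direct sum of the form stated in the proposition, with $d=2l_1+2l_2+2l_3+2l_4+2l_5+4l_6+4l_7$. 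The reverse inclusion $\cm(\XX)\subseteq\X(d)$ is immediate from \eqref{3}.

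The main obstacle is the lifting step: one must verify concretely that Kn\"orrer's construction is compatible with the one-parameter $V$-families in \eqref{6}, i.e.\ that the doubled matrix factorizations give $V$-flat $\Rss\otimes_kV$-modules whose generic and special fibres match the Kn\"orrer images of the corresponding fibres over $R$. This amounts to a direct linear-algebra computation over $V[\![x,y,u,v]\!]$ rather than a deep theorem, but it is the essential content hidden behind the transfer argument; once it is in place, the bookkeeping pattern of Proposition \ref{B5} applies verbatim.
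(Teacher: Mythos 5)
Your proposal follows essentially the same route as the paper's proof: identify the indecomposable MCM $\Rss$-modules as Kn\"orrer images of those over $R=k[\![x,y]\!]/(x^2y)$, lift the degenerations \eqref{100} by applying the Kn\"orrer doubling to the matrix factorizations in \eqref{6} and to the morphisms between them (checking $V$-flatness and that the morphisms become isomorphisms after inverting $t$), record the multiplicities $2$ and $4$, and repeat the bookkeeping of Proposition \ref{B5}. The ``direct linear-algebra computation'' you flag as the essential hidden content is exactly what the paper carries out explicitly with the block matrices $\left(\begin{smallmatrix}\alpha&0\\0&\beta\end{smallmatrix}\right)$ and $\left(\begin{smallmatrix}\beta&0\\0&\alpha\end{smallmatrix}\right)$.
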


\begin{proof}
As shown in the proof of \cite[Theorem 12.10]{Y}, each nonfree indecomposable MCM $R^{\sharp \sharp}$-module has the form $M^{\sharp \sharp}$ for some nonfree  indecomposable MCM module $M$ over $R = k[\![x,y ]\!]/(x^2y )$.
With the notation of Proposition \ref{B5}, the modules $\Rss$, $(xy)^{\sharp \sharp}$, $(x)^{\sharp \sharp}$, $(y)^{\sharp \sharp}$, $(x^2)^{\sharp \sharp}$, $(M_n ^+)^{\sharp \sharp}$, $(M_n ^-)^{\sharp \sharp}$, $(I_n ^+)^{\sharp \sharp},(I_n ^-)^{\sharp \sharp}$ for $n \geq1$ are nonisomorphic indecomposable MCM $\Rss$-modules.  
We claim that 
$$
 ( M_n ^{\pm})^{\sharp \sharp}  \dega (M_{n+2}^{\pm})^{\sharp \sharp},\quad
 ( I_n ^{\mp})^{\sharp \sharp}  \dega ( I_{n+1}^{\pm})^{\sharp \sharp} \quad (n\ge0)
$$
respectively.
In fact, let $(\Phi, \Psi)$ be a matrix factorization of $Q_{n} ^{+}$; see (\ref{6}). 
Consider the pair of matrices
$$
(\Phi ^{\sharp \sharp}   , \Psi ^{\sharp \sharp}  )= (\small \left( \begin{smallmatrix}\Phi & u + \sqrt{-1}v \\ -u+\sqrt{-1}v& \Psi \end{smallmatrix}\right), \left( \begin{smallmatrix}\Psi & -u - \sqrt{-1}v \\ u-\sqrt{-1}v& \Phi \end{smallmatrix}\right) ).
$$
We see that $ (Q_{n} ^{+})^{\sharp \sharp}  = \Coker ( \Phi ^{\sharp \sharp} )$ (resp. $(Q_{n} ^{-})^{\sharp \sharp}  = \Coker (\Psi ^{\sharp \sharp})$) gives a degeneration $( M_n ^{+ })^{\sharp \sharp}  \dega (M_{n+2}^{+ })^{\sharp \sharp} $ (resp. $( M_n ^{- })^{\sharp \sharp}  \dega (M_{n+2}^{- })^{\sharp \sharp}$).
As $(\Phi ^{\sharp \sharp}   , \Psi ^{\sharp \sharp}  )$ is a matrix factorization of $x^2y + u^2 + v^2$, the $R^{\sharp \sharp} \otimes _k V$-modules $ (Q_{n} ^{+})^{\sharp \sharp}$ and $(Q_{n} ^{-})^{\sharp \sharp}$ are MCM, whence $V$-flat. 
Set $\alpha = \left( \begin{smallmatrix}  0 & -1\\ t^2&-ty\end{smallmatrix}\right)$ and $\beta = \left( \begin{smallmatrix}  0 & 1\\ -t^2&-ty \end{smallmatrix}\right)$. 
We have $\left( \begin{smallmatrix} \alpha & 0\\ 0&\beta\end{smallmatrix}\right) \Phi^{\sharp \sharp} = \varphi_n ^{\sharp \sharp}\left( \begin{smallmatrix}  \beta & 0\\ 0&\alpha \end{smallmatrix}\right)$ and $\left( \begin{smallmatrix}  \beta & 0\\ 0&\alpha\end{smallmatrix}\right) \Psi^{\sharp \sharp} = \psi_n ^{\sharp \sharp}\left( \begin{smallmatrix}  \alpha & 0\\ 0&\beta \end{smallmatrix}\right)$, where $\varphi _n =\left( \begin{smallmatrix}  x & y^n\\ 0&-x\end{smallmatrix}\right)$ and $\psi _n =\left( \begin{smallmatrix}  xy & y^{n+1}\\ 0&-xy\end{smallmatrix}\right)$. 
Hence $(\left( \begin{smallmatrix} \alpha & 0\\ 0&\beta\end{smallmatrix}\right), \left( \begin{smallmatrix}  \beta & 0\\ 0&\alpha\end{smallmatrix}\right) )$ gives a morphism of matrix factorizations $(\Phi ^{\sharp \sharp}   , \Psi ^{\sharp \sharp}  ) \to (\varphi _n ^{\sharp \sharp}, \psi _n ^{\sharp \sharp})$. 
Since $\alpha \otimes_k K$ and $\beta \otimes_k K$ are invertible, $\left( \begin{smallmatrix}  \alpha & 0\\ 0&\beta\end{smallmatrix}\right)\otimes_k K$ and $\left( \begin{smallmatrix}  \beta & 0\\ 0&\alpha\end{smallmatrix}\right)\otimes_k K$ are also invertible. 
Thus $(Q_{n} ^{\pm })^{\sharp \sharp}  \otimes_V K \cong (M_{n}^{\pm})^{\sharp \sharp} \otimes_k K$. 
Clearly  $(Q_{n} ^{\pm })^{\sharp \sharp}  \otimes_V V/tV \cong (M_{n+2}^{\pm})^{\sharp \sharp}$, so that we obtain a degeneration  $ ( M_n ^{\pm})^{\sharp \sharp}  \dega (M_{n+2}^{\pm})^{\sharp \sharp}$. 
Similarly, we can show $(I_n ^{\mp})^{\sharp \sharp}  \dega ( I_{n+1}^{\pm})^{\sharp \sharp}$.
Thus the claim follows.
Note that there are equalities
$$
\e( R^{\sharp \sharp}) = \e ( (xy)^{\sharp \sharp})= \e(  (x)^{\sharp \sharp}) = \e ((y)^{\sharp \sharp}) = \e((x^2)^{\sharp \sharp} ) = 2,\qquad
\e( ( M_n ^{\pm})^{\sharp \sharp} )=\e( (I_n ^{\pm})^{\sharp \sharp})=4. 
$$
Similar arguments as in the proof of Proposition \ref{B5} yield the assertion. 
\end{proof}

%%%%%%%%%%%%%%%%%%%%%%%%%
%%%%%%%%%%%%%%%%%%%%%%%%%
%%%%%%%%%%%%%%%%%%%%%%%%%
Now we are ready to prove Theorem \ref{main}. 

\begin{proof}[\bf Proof of Theorem \ref{main}]
Let $R=S/(f)$ be a hypersurface as in Propositions \ref{B4} and \ref{B7}, and assume that the base field $k$ is algebraically closed and has characteristic not two.
The proofs of those propositions show that there exist a finite number of MCM $R$-modules $G_1,\dots,G_n$ (depending only on $R$) such that all the isomorphism classes of indecomposable MCM $R$-modules belong to $\cm(\{G_1,\dots,G_n\})$.
It follows from \cite[Proposition 5.3]{HTY} that all the isomorphism classes of indecomposable MCM $\Rss$-modules are in $\cm(\{G_1^{\sharp\sharp},\dots,G_n^{\sharp\sharp}\})$, where $\Rss=S[\![u,v]\!]/(f+u^2+v^2)$ is a hypersurface and $G_i^{\sharp\sharp}$ stands for the image of $G_i$ by the Kn\"orrer periodicity functor.

The case where the ring $R$ is a 1-dimensional hypersurface of type $(D_{\infty})$ follows from Proposition \ref{B5}.
Now let $R$ be one of the other hypersurfaces in the theorem.
Iterating the above argument, we find MCM $R$-modules $G_1,\dots,G_n$ such that all the isomorphism classes of indecomposable MCM $R$-modules belong to $\XX:=\cm(\{G_1,\dots,G_n\})$.
Let $d>0$ be an integer, and take $M\in\X(d)$.
Then there exist indecomposable MCM $R$-modules $M_1,\dots,M_s$ and integers $e_1,\dots,e_s$ such that $M=M_1^{\oplus e_1}\oplus\cdots\oplus M_s^{\oplus e_s}$.
Each $M_i$ is in $\XX$, so $M_i\in\cm(G_{l_i})$ for some $l_i$.
Then $M\in\cm(G_{l_1}^{\oplus e_1}\oplus\cdots\oplus G_{\l_s}^{\oplus e_s})$ by Lemma \ref{5}.
Hence we obtain
$$
\X(d)=\cm(\{G_1^{\oplus a_1}\oplus\cdots\oplus G_n^{\oplus a_n}\mid a_1,\dots,a_n\ge0\text{ with }a_1\e(G_1)+\cdots+a_n\e(G_n)=d\}).
$$
This completes the proof of the theorem.
\end{proof}

In view of our Theorem \ref{main}, it is quite natural to ask what happens for even-dimensional hypersurfaces of type $(\A_\infty)$.
We end this section by stating a remark on this.

\begin{remark}
Let $R = k[\![ x, y, z]\!]/(xy)$, that is, the $2$-dimensional hypersurface of type $(\A_{\infty})$, and assume that $k$ is algebraically closed.
The nonisomorphic indecomposable MCM $R$-modules are
$$
R,\,
(x),\,
(y),\,
(x, z^n),\,
(y, z^n)\ (n \geq 1).
$$  
In \cite[Theorem 1.1]{HTY} it is shown that for all $a<b$, the MCM module $(x, z^a)$ (resp. $(y, z^a)$) does not degenerate to $(x, z^b)$ (resp. $(y,z^b)$). 
Hence we guess that $N \not\in \CC(M)$ for all indecomposable MCM $R$-modules $M$ and $N$ with $M \not \cong N$, so that $\X(d)$ cannot be described as a finite union of $\cm(M)$. 

By the way, it may occur that $N \in \CC(M)$ even if $M$ does not degenerate to $N$.
For example, let $R = k[\![ x, y]\!]/(x^3 + y^2)$. 
Then the exact sequence $0 \to \m \to R^{\oplus 2} \to \m \to 0$ shows that $R^{\oplus 2} \dega \m ^{\oplus 2}$ by Proposition \ref{A2}(2).
However $R\not\dega \m$ by \cite[Proposition 3.3]{Y11}. 
\end{remark}

%%%%%%%%%%%%%%%%%%%%%%%%%%%%%%%%%%%%%%%%%%%%%%%%
\ifx\undefined\bysame 
\newcommand{\bysame}{\leavevmode\hbox to3em{\hrulefill}\,} 
\fi

\end{document}